\documentclass[10pt]{amsart}
\usepackage{amsmath}
\usepackage[usenames,dvipsnames]{color}
\usepackage{parskip}
\usepackage{amsfonts}
\usepackage{amscd}
\usepackage[centertags]{amsmath}
\usepackage{amssymb}
\usepackage{stmaryrd}
\usepackage[all,cmtip]{xy}
\usepackage[english]{babel}
\usepackage{tabularx}
\usepackage{amsxtra}
\usepackage{euscript}
\usepackage[T1]{fontenc}
\usepackage{doc, exscale, fontenc, latexsym, syntonly}
\usepackage{amsfonts}
\usepackage{amsthm}
\usepackage{graphicx}
\usepackage{mciteplus}
\usepackage{cite}

\newtheorem{theorem}{Theorem}[section]
\newtheorem{corollary}[theorem]{Corollary}

\newtheorem{proposition}[theorem]{Proposition}
\newtheorem{remark}[theorem]{Remark}
\theoremstyle{definition}
\newtheorem{definition}[theorem]{Definition}
\newtheorem{example}[theorem]{Example}
\theoremstyle{remark}

\numberwithin{figure}{section}
\numberwithin{table}{section}

\newcommand*\acknowledgment[1]{%
	\begingroup\noindent
	\rightskip\leftskip
	\begin{flushleft}\textbf{\large Acknowledgment.}\, #1%
		\par\vspace*{1mm}\end{flushleft}\endgroup}
\begin{document}

\title[Topological Complexity Related To Multi-Valued Functions]{Topological Complexity Related To Multi-Valued Functions}

\author{MEL\.{I}H \.{I}S}
\date{\today}

\address{\textsc{Melih Is,}
Ege University\\
Faculty of Sciences\\
Department of Mathematics\\
Izmir, Turkiye}
\email{melih.is@ege.edu.tr}

\subjclass[2010]{32A12, 55M30, 14D06, 14F35}

\keywords{Multi-valued function, topological complexity, Lusternik-Schnirelmann category, fibration}

\begin{abstract}
 In this paper, we deal with the robot motion planning problem in multi-valued function theory. We first enrich the multi-homotopy studies by introducing a multi-homotopy lifting property and a multi-fibration. Then we compute both a topological multi-complexity and a Lusternik-Schnirelmann multi-category of a space or a multi-fibration.
\end{abstract}

\maketitle

\section{Introduction}
\label{intro}

\quad In recent decades, the field of robotics has witnessed remarkable advancements, leading to the integration of autonomous robots into various aspects of our daily lives, from manufacturing and healthcare to exploration and transportation. A fundamental challenge in this domain is the efficient and reliable planning of robot motion through complex and dynamic environments. As robots become more prevalent and their tasks increasingly intricate, the need for innovative solutions to address the motion planning problem has grown exponentially. Traditional methods in robot motion planning have predominantly relied on single-valued functions, where the goal is to find a collision-free path for a robot from its initial configuration to a desired final configuration. While these techniques have yielded substantial progress, they often encounter limitations in handling complex, high-dimensional spaces and non-deterministic environments. The quest for more versatile and adaptable approaches has driven researchers to explore new avenues, and among them, the integration of algebraic topology methods has shown great promise.

\quad Robot motion planning problem has been one of the main application areas of algebraic topology since Farber introduced the notion of topological complexity number of a path-connected space, denoted by TC$(X)$\cite{Farber:2003}. Later, this investigation enriches with the help of new concepts such as higher topological complexity number of a path-connected space\cite{Rudyak:2010} (denoted by TC$_{n}(X)$ for $n>1$), topological complexity numbers of a continuous surjection\cite{Pavesic:2019,MelihKaraca:2022} (denoted by TC$(f)$ or TC$_{n}(f)$ for a a continuous surjection), topological complexity numbers in digital topology\cite{KaracaMelih:2018,MelihKaraca:2020} (denoted by TC$(X,\kappa)$ or TC$_{n}(X,\kappa)$ for an adjacency relation $\kappa$ on a digitally connected digital image $X$), and topological complexity numbers on proximity spaces\cite{MelihKaraca:2023} (denoted by TC$(X,\delta)$ for a proximally connected space $(X,\delta)$). As a next step, it is important to examine the topological complexity computations on multi-valued functions, which is the main subject of this study, as it creates a new area of discussion.

\quad Multi-valued functions, a multifaceted concept with a rich historical heritage in mathematics, represent a profound departure from the traditional paradigm of single-valued functions. Their roots can be traced back to the pioneering work of 19th-century mathematicians particularly within the domains of complex analysis and algebraic geometry. Multi-valued functions are observed in the study of some important topological properties\cite{Borges:1967,Smithson:1965}, and in very broad fields of study, especially homotopy theory\cite{Strother:1955}. One can find recent studies on certain concepts of homotopy theory, such as homotopy extension property, fundamental group constructions, or covering spaces, using these functions in \cite{KarDenizTemiz:2021}, \cite{KaracaOzkan:2023}, \cite{TemizelKaraca:2022}, and \cite{TemizelKaraca:2023}. 

\quad As in single-valued functions, the primary goal of this work is to introduce topological complexity numbers for multi-valued functions by combining the ideas of homotopic distance and Schwarz genus. First, we examine certain concepts that guide us on how to handle the basic properties of multi-valued functions in Section \ref{sec:1}. Then, in Section \ref{sec:2}, we introduce the multi-homotopy lifting property and the well-known concept of fibration in a multi-valued setting. The purpose of this is to show that the path fibration of single-valued functions does not lose its fibration property when multi-valued functions are considered. Section \ref{sec:3} is dedicated to the concepts of $m-$Schwarz genus, topological $m-$complexity, $m-$homotopic distance, and important properties and examples of these concepts. After that, we generalize this examination to the problem of computing the topological $m-$complexity numbers of $m-$fibrations in Section \ref{sec:4}. The next section considers the well-known concept of the Lusternik-Schnirelmann category (simply denoted by LS or cat) for both a space and a function in terms of multi-valued settings. See \cite{CorneaLupOpTan:2003} to have detailed information for the category LS concerning single-valued functions. The last section is devoted to discussing the properties obtained in this study and what can be done in future studies of topological multi-complexity and its related invariants.

\section{Preliminaries}
\label{sec:1}

\quad In this section, we introduce some previously presented notions about multi-valued functions. To avoid confusion, throughout the article, we represent sets with capital letters such as $X$, $A$, and $F$, and functions with Greek letters such as $\alpha$, $\eta$, and $\mu$. 

\quad A \textit{multi-valued function} $\alpha$ from a topological space $A$ to another topological space $B$, denoted by $\alpha : A \rightrightarrows B$, is a function that assigns any point of $A$ to the nonempty subset of $B$\cite{Anisiu:1981}. For simplicity, we prefer to use ''$m-$'' instead of the words ''multi'' and ''multi-valued'' throughout the rest of the article. For example, we write $m-$function instead of multi-valued function. Consider an $m-$function $\alpha : A \rightrightarrows B$ and any open subset $V$ of $B$. Then \textit{upper inverse} $\alpha^{+}(V)$ and \textit{lower inverse} $\alpha^{-}(V)$ of $\alpha$ are given by
\begin{eqnarray*}
	\alpha^{+}(V) = \{a \in A : F(a) \subset V\} \hspace*{0.5cm} \text{and} \hspace*{0.5cm} \alpha^{-}(V) = \{a \in A : F(a) \cap V \neq \emptyset\},
\end{eqnarray*}
respectively\cite{Borges:1967}. Using the lower and the upper inverse, the continuity of $m-$functions can be defined as follows\cite{Borges:1967}. If $\alpha^{+}(V) \subset A$ is open for every open subset $V \subset B$, then $\alpha$ is called \textit{upper semicontinuous}. If $\alpha^{-}(V) \subset A$ is open for every open subset $V \subset B$, then $\alpha$ is called \textit{lower semicontinuous}. $\alpha$ is said to be \textit{$m-$continuous} provided that $\alpha$ is both upper and lower semicontinuous. 

\quad Given a subset $C$ of $A$, the \textit{$m-$inclusion} map is a function $$\alpha : Y \rightrightarrows A$$ with $\alpha(y) = \{y\}$ for any point $y \in Y$\cite{KarDenizTemiz:2021}. Note that an $m-$inclusion function is $m-$continuous\cite{KarDenizTemiz:2021}. Given two spaces $A$ and $B$, the \textit{$i-$th $m-$projection} map is a function $$\alpha : A_{1} \times A_{2} \times \cdots \times A_{n} \rightrightarrows A_{i}$$ with $\alpha(a_{1},a_{2},\cdots,a_{n}) = \{a_{i}\}$ for all $(a_{1},a_{2},\cdots,a_{n}) \in A_{1} \times A_{2} \times \cdots \times A_{n}$ and each $i \in \{1,2,\cdots,n\}$. An \textit{$m-$constant} map $$\alpha : A \rightrightarrows B$$ is a function with $\alpha(a) = B_{0}$ for all $a \in A$\cite{KarDenizTemiz:2021}. An \textit{$m-$identical} map $$1_{A} : A \rightrightarrows A$$ on a space $A$ is a function with $1_{A}(a) = \{a\}$ for all $a \in A$\cite{Anisiu:1981}. Given an $m-$function $\alpha : A \rightrightarrows B$, it is said to be an \textit{$m-$section} provided that there is an $m-$function $\beta : B \rightrightarrows A$ with the property $$\{a\} = \displaystyle \bigcap_{b \in \alpha(a)}\beta(b)$$ for all $a \in A$\cite{KaracaOzkan:2023}.

\quad The \textit{inverse of an $m-$function} $\alpha : A \rightrightarrows B$ is a function $$\alpha^{-1} : B \rightrightarrows A$$ with $\alpha^{-1}(D) = \{a \in A : \alpha(a) \subset D\}$ for each nonempty subset $D \subset B$\cite{Borges:1967}. An $m-$function $\alpha : A \rightrightarrows B$ is said to be \textit{one-to-one} provided that for any distinct $a_{1}$, $a_{2} \in A$, the intersection $\alpha(a_{1}) \cap \alpha(a_{2})$ is an empty set\cite{Anisiu:1981}. An $m-$function $\alpha : A \rightrightarrows B$ is said to be \textit{surjective} provided that the range of $\alpha$ is $B$\cite{Anisiu:1981}. Moreover, a surjective and one-to one $m-$function $\alpha$ is called \textit{$m-$homeomorphism} provided that both $\alpha$ and the inverse $\alpha^{-1} : B \rightrightarrows A$ of $\alpha$ is $m-$continuous\cite{KarDenizTemiz:2021}.

\quad An $m-$continuous function $$\alpha : I \rightrightarrows A$$ is called an \textit{$m-$path} on $A$, where $I = [0,1]$\cite{Strother:1955}. Let $A$ be a space. Then it is called \textit{$m-$pathwise connected} provided that there is an $m-$continuous function $\alpha : I \rightrightarrows A$ with the properties $\alpha(0) = A_{0}$ and $\alpha(1) = A_{1}$ for any closed subsets $A_{0}$, $A_{1} \subset A$\cite{Strother:1955}. Given two $m-$functions $\alpha$, $\beta : A \rightrightarrows B$, they are said to be \textit{$m-$homotopic} provided that there is an $m-$continuous function $$\gamma : A \times I \rightrightarrows B$$ with the properties $\gamma(a,0) = \alpha(a)$ and $\gamma(a,1) = \beta(a)$\cite{Strother:1955}. An $m-$continuous function $\alpha : A \rightrightarrows B$ is an \textit{$m-$homotopy equivalence} if there is an $m-$continuous function $\beta : B \rightrightarrows A$ with the properties that $\beta \circ \alpha$ is $m-$homotopic to the $m-$identical map on $A$ and $\alpha \circ \beta$ is $m-$homotopic to $m-$identical map on $B$\cite{KarDenizTemiz:2021}. Also, $A$ and $B$ \textit{have the same $m-$homotopy type} provided that there is an $m-$homotopy equivalence. A topological space $A$ is said to be an \textit{$m-$contractible} provided that the $m-$identical map and the $m-$constant map are $m-$homotopic on $A$\cite{KarDenizTemiz:2021}. $\alpha : A \rightrightarrows B$ is said to be \textit{null $m-$homotopic} if $\alpha$ and an $m-$constant map are $m-$homotopic\cite{KarDenizTemiz:2021}. This shows that the $m-$contractibility can also be stated as follows. If the $m-$identical map on $A$ is null $m-$homotopic, then we say that $A$ is $m-$contractible.

\quad Continuous function spaces on $m-$functions are widely studied in \cite{GuptaSarma:2017}. Given two topological spaces $A$ and $B$, an $m-$continuous \textit{$m-$function space}, denoted by $B^{A}_{m}$, is the collection of all $m-$continuous $m-$functions from $A$ to $B$. Let $A$ and $B$ be two topological spaces. Then a function $$\text{Ev} : B^{A}_{m} \times A \rightrightarrows B$$ with Ev$(\alpha,a) = \alpha(a)$ is said to be an \textit{$m-$evaluation map}. This function is clearly $m-$continuous since $\alpha \subset B^{A}_{m}$ is $m-$continuous.   

\quad Homotopy extension property for $m-$functions is defined in \cite{KaracaOzkan:2023} as follows. Let $A$ be a space and $C$ a subset of $A$. Then the pair $(A,C)$ has an \textit{$m-$homotopy extension property} provided that for any $m-$continuous function $\beta : A \rightrightarrows b$ and any $m-$homotopy $\beta^{'} : C \times I \rightrightarrows B$ with $\beta(a) = \beta^{'}(a,0)$ for all $a \in C$, there is an $m-$homotopy $$\alpha : A \times I \rightrightarrows B$$ for which $\alpha(a,0) = \beta(a)$ and $\alpha|_{C \times I} = \beta^{'}$. 

\section{Multi-Fibrations}
\label{sec:2}

\quad An $\epsilon-\delta$ definition of an $m-$homotopy lifting property with respect to a topological space is given in \cite{GiraldoSanjurjo:2001}. To compute topological complexity numbers in $m-$functions, we need a more technical definition of the $m-$homotopy lifting property (or an $m-$fibration) in terms of comprehensive homotopy theory.

\begin{definition}\label{def3}
	Let $\rho : A \rightrightarrows B$ be an $m-$continuous function. Then $\rho$ has the $m-$homotopy lifting property (simply $m-$hlp) with respect to a space $W$ provided that there exists a filler $\eta : W \times I \rightrightarrows A$ for all commutative diagrams
	\begin{displaymath}
		\xymatrix{
			W \ar[r]^{\alpha} \ar[d]_{\nu_{0}} &
			A \ar[d]^{\rho} \\
			W \times I \ar[r]_{\beta}  \ar@{.>}[ur]^{\eta} & B,}
	\end{displaymath}
    where $\nu_{0} : W \rightrightarrows W \times I$ is the $m-$inclusion defined by $\nu_{0}(w) = \{(w,0)\}$ for all $w \in W$.
\end{definition}

\quad The function $\rho$ is called a multi-fibration (simply $m-$fibration) if it has the $m-$hlp with respect to any space.

\begin{example}
	If $\rho : A \rightrightarrows B$ is constant, then it is an $m-$fibration. Indeed, the filler $\eta$ can be defined as $\eta(w,t) = \alpha(w)$ for any $w \in W$.
\end{example}

\begin{example}
	The $m-$projection $\Pi_{i} : A_{1} \times A_{2} \rightrightarrows A_{i}$ is an $m-$fibration with each $i \in \{1,2\}$. For $i = 1$, the filler $\eta : W \times I \rightrightarrows A_{1} \times A_{2}$ can be defined by \[\eta(w,t) = (\beta(w,t),\Pi_{2} \circ \alpha(w)),\] where $\Pi_{2} : A_{1} \times A_{2} \rightrightarrows A_{2}$ is the second $m-$projection. Also, the case $i = 2$ admits a similar construction, i.e.,
	there exists a filler $\eta : W \times I \rightrightarrows A_{1} \times A_{2}$ given by \[\eta(w,t) = (\Pi_{1} \circ \alpha(w),\beta(w,t))\] for the first $m-$projection $\Pi_{1} : A_{1} \times A_{2} \rightrightarrows A_{1}$.
\end{example}

\begin{proposition}\label{prop2}
	An $m-$homeomorphism is an $m-$fibration.
\end{proposition}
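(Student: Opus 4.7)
The plan is to exhibit an explicit lift for every commutative square of the form in Definition \ref{def3}. Given any space $W$ together with $m$-continuous maps $\alpha : W \rightrightarrows A$ and $\beta : W \times I \rightrightarrows B$ satisfying $\beta \circ \nu_{0} = \rho \circ \alpha$, I would define the filler by the composition
\[
\eta := \rho^{-1} \circ \beta : W \times I \rightrightarrows A,
\]
where $\rho^{-1} : B \rightrightarrows A$ is the inverse $m$-function provided by the hypothesis that $\rho$ is an $m$-homeomorphism.

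I would then verify the three required properties in turn. First, $\eta$ is $m$-continuous, because $\beta$ is $m$-continuous by assumption, $\rho^{-1}$ is $m$-continuous by the definition of an $m$-homeomorphism, and composition of $m$-continuous $m$-functions is $m$-continuous. Second, the lower triangle commutes:
\[
\rho \circ \eta = \rho \circ \rho^{-1} \circ \beta = \beta,
\]
since $\rho \circ \rho^{-1}$ equals the $m$-identical map $1_{B}$. Third, the upper triangle commutes:
\[
\eta \circ \nu_{0} = \rho^{-1} \circ \beta \circ \nu_{0} = \rho^{-1} \circ \rho \circ \alpha = \alpha,
\]
where the middle equality uses the commutativity of the given diagram and the outer equality uses $\rho^{-1} \circ \rho = 1_{A}$.

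The main point that needs care — and likely the only genuine obstacle — is justifying the identities $\rho \circ \rho^{-1} = 1_{B}$ and $\rho^{-1} \circ \rho = 1_{A}$ under the paper's convention $\rho^{-1}(D) = \{a \in A : \rho(a) \subset D\}$. Because $\rho$ is simultaneously surjective and one-to-one in the $m$-sense, the sets $\rho(a)$ are disjoint as $a$ ranges over $A$ and their union is $B$; for $\rho^{-1}$ to be an $m$-function at every point of $B$, each $\rho(a)$ must in fact be a singleton, and then $\rho^{-1}(\{\rho(a)\}) = \{a\}$ and $\rho(\rho^{-1}(\{b\})) = \{b\}$ follow directly. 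Once this point is settled, $\eta$ is the desired filler for every $W$, and $\rho$ is an $m$-fibration.
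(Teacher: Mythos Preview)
Your approach is exactly the paper's: the filler is $\eta = \rho^{-1}\circ\beta$ (the paper in fact writes $\alpha^{-1}\circ\beta$, an evident typo for $\rho^{-1}\circ\beta$), and the two triangle identities follow from $\rho\circ\rho^{-1}=1_{B}$ and $\rho^{-1}\circ\rho=1_{A}$. Your closing paragraph justifying these identities under the paper's convention for $\rho^{-1}$ is more careful than what the paper itself supplies, which simply asserts the conclusion.
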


\begin{proof}
	Let $\rho : A \rightrightarrows B$ be an $m-$homeomorphism, $\alpha : W \rightrightarrows A$ any $m-$function, $\beta : W \times I \rightrightarrows B$ any $m-$homotopy, and $\nu_{0} : W \rightrightarrows W \times I$ the $m-$inclusion. Then the $m-$function $\eta : W \times I \rightrightarrows A$ defined by $\eta = \alpha^{-1} \circ \beta$ is the desired filler with the property that $\rho \circ \eta = \beta$ and $\eta \circ \nu_{0} = \alpha$.
\end{proof}

\quad Let $\rho : A \rightrightarrows B$ be an $m-$fibration and $\gamma : B^{'} \rightrightarrows B$ an $m-$continuous function. Then the $m-$pullback is given by $\gamma^{\ast}A = \{(b^{'},a) : \gamma(b^{'}) = \rho(a)\} \subset B^{'} \times A$ (see the following diagram, where $\Pi_{1} : \gamma^{\ast}A \rightrightarrows B^{'}$ is the first $m-$projection map and $\Pi_{2} : \gamma^{\ast}A \rightrightarrows A$ is the second $m-$projection map.).
$$\xymatrix{
	\gamma^{\ast}A \ar@<1ex>[r]^{\Pi_{2}} \ar@<-1ex>[r] \ar@<1ex>[d] \ar@<-1ex>[d]_{\Pi_{1}} &
	A \ar@<-1ex>[d] \ar@<1ex>[d]^{\rho} \\
	B^{'} \ar@<-1ex>[r]_{\gamma} \ar@<1ex>[r] & B.}$$
\quad In parallel with single-valued functions, the composition and the cartesian product of two $m-$fibrations, and the $m-$pullback of an $m-$fibration are also $m-$fibration. 

\begin{definition}
	Let $\alpha : A \times B \rightrightarrows C$ be an $m-$continuous function. Then the function $\alpha_{adj} : A \rightrightarrows C^{B}$, $\alpha_{adj}(a)(D) = \alpha(a,b)$, is called an $m-$adjoint function for any subset $D \subseteq B$ containing $b$.
\end{definition}

\begin{proposition}
	$\alpha_{adj}$ is $m-$continuous provided that $\alpha : A \times B \rightrightarrows C$ is an $m-$continuous function. 
\end{proposition}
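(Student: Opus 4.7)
The plan is to verify $m$-continuity of $\alpha_{adj}$ by checking upper and lower semicontinuity separately against a subbasis for the topology on the $m$-function space $C^B_m$, following the standard strategy used to prove exponential laws. The subbasis (as developed in \cite{GuptaSarma:2017}) will consist of two types of open sets: \emph{upper-type} sets of the form $\langle K,V\rangle^+ = \{\gamma \in C^B_m : \gamma(K) \subseteq V\}$ and \emph{lower-type} sets $\langle K,V\rangle^- = \{\gamma \in C^B_m : \gamma(k) \cap V \neq \emptyset \text{ for some } k \in K\}$, where $K \subseteq B$ is taken from a suitable family (typically compact) and $V \subseteq C$ is open. So the goal reduces to showing that $\alpha_{adj}^+(\mathcal{U})$ and $\alpha_{adj}^-(\mathcal{U})$ are open in $A$ for each subbasic $\mathcal{U}$.

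First I would handle the upper-type case. Fix $\mathcal{U} = \langle K,V\rangle^+$ and $a_0 \in \alpha_{adj}^+(\mathcal{U})$. Unwinding the defining relation $\alpha_{adj}(a)(b) = \alpha(a,b)$, membership of $a_0$ translates into the condition $\alpha(\{a_0\}\times K) \subseteq V$. For every $b \in K$, upper semicontinuity of $\alpha$ at $(a_0,b)$ supplies a product open neighborhood $U_b \times V_b$ of $(a_0,b)$ with $\alpha(U_b \times V_b) \subseteq V$. The family $\{V_b\}_{b \in K}$ then covers $K$, and a tube-lemma type argument extracts a finite subcover $V_{b_1},\dots,V_{b_n}$, so that $W = \bigcap_{i=1}^n U_{b_i}$ is an open neighborhood of $a_0$ contained in $\alpha_{adj}^+(\mathcal{U})$.

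Lower semicontinuity of $\alpha_{adj}$ is analogous but easier, since one needs to preserve only a single-point witness. For $a_0 \in \alpha_{adj}^-(\langle K,V\rangle^-)$, fix some $b_0 \in K$ with $\alpha(a_0,b_0) \cap V \neq \emptyset$; lower semicontinuity of $\alpha$ makes $\alpha^-(V)$ open in $A \times B$, which yields an open neighborhood $U_0 \times V_0$ of $(a_0,b_0)$ meeting $V$ in the required way, and the projection $U_0$ is the open neighborhood of $a_0$ we need.

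The main obstacle will be the tube-lemma step: in the $m$-setting, upper semicontinuity of $\alpha$ only guarantees that $\alpha^+(V)$ is open, so one must check that the chosen subbasis for $C^B_m$ indeed restricts $K$ to a family for which compactness (or local compactness of $B$) is available, and that the finite intersection of product neighborhoods still produces the global condition $\alpha(W \times K) \subseteq V$. A secondary subtlety is pinning down the interpretation of $\alpha_{adj}(a)$ as an element or subset of $C^B_m$: the stated definition essentially describes $\alpha_{adj}(a)$ as the single multi-valued section $b \mapsto \alpha(a,b)$, and one must be careful when forming $\alpha_{adj}^\pm$ so that this matches the subbasis chosen on $C^B_m$.
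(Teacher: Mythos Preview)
Your approach is sound but takes a genuinely different route from the paper's. The paper's proof is a two-line appeal to an external characterization: it invokes Exercise~5.3.46 of \cite{Geletu:2006}, which links upper (respectively lower) semicontinuity of a set-valued map to openness of the locus $\{x : f(x) = \emptyset\}$, and then transfers this condition from $\alpha$ on $A\times B$ down to $\alpha_{adj}$ on $A$ by observing that $\{a : \alpha_{adj}(a)(D) = \emptyset\}$ is open for every $D \subseteq B$. No subbasis, no tube lemma, and no compactness hypothesis enter.

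By contrast, you run the standard exponential-law argument: fix the compact-open subbasis on $C^{B}_{m}$ from \cite{GuptaSarma:2017}, translate membership in $\alpha_{adj}^{\pm}(\mathcal{U})$ back to a statement about $\alpha$ on a slice $\{a_{0}\}\times K$, and then use upper/lower semicontinuity of $\alpha$ together with a tube-lemma covering argument to produce an open neighborhood of $a_{0}$. This buys transparency---the topology on $C^{B}_{m}$ is visibly used and the argument is self-contained---at the cost of an extra hypothesis you correctly flag: the tube-lemma step needs $K$ compact, which the compact-open subbasis supplies, but the paper's statement carries no compactness assumption on $B$ and its proof does not need one. Your secondary concern about whether $\alpha_{adj}(a)$ is a point or a subset of $C^{B}_{m}$ is well taken; in the paper's definition it is effectively the single section $b\mapsto \alpha(a,b)$, so $\alpha_{adj}$ is singleton-valued and $m$-continuity reduces to ordinary continuity, which is what makes your subbasis verification legitimate.
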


\begin{proof}
	Since $\alpha$ is upper semicontinuous (because it is $m-$continuous), we have that $\{(a,b) \in A \times B : \alpha(a,b) = \emptyset\}$ is open in $A \times B$ by Exercise 5.3.46 of \cite{Geletu:2006}. It follows that $\{a \in A : \alpha_{adj}(a)(D) = \emptyset\}$ is open in $A$ for any subset $D$ of $B$. $D$ contains $b$ (it is nonempty), so we obtain that $\{a \in A : \alpha_{adj}(a) = \emptyset\}$ is open in $A$. This means that $\alpha_{adj}$ is upper semicontinuous. Similarly, it can be easily shown that $\alpha_{adj}$ is lower semicontinuous by using Exercise 5.3.46 of \cite{Geletu:2006}.
\end{proof}

\quad Let $\beta : A \rightrightarrows C^{B}$ be an $m-$continuous function. Then the function
\begin{eqnarray*}
	\hat{\beta} : A \times B \rightrightarrows C
\end{eqnarray*}
defined by $\hat{\beta} = \text{Ev} \circ (\beta \times 1_{Y})$
is $m-$continuous since Ev, $\beta$, and the $m-$identical map $1_{Y}$ are all $m-$continuous functions.

\begin{theorem}
	The function $\gamma : C^{A \times B} \rightrightarrows C^{B^{A}}$, defined by $\gamma(\beta) = \beta_{adj}$, is an $m-$homeomorphism.
\end{theorem}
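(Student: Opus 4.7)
My plan is to construct an explicit inverse for $\gamma$ at the singleton level and then verify $m-$continuity in both directions. For $\beta : A \rightrightarrows C^{B}$ representing an element of $C^{B^{A}}$, the natural candidate is the ''un-curried'' $m-$function $\hat{\beta} = \text{Ev} \circ (\beta \times 1_{B}) : A \times B \rightrightarrows C$, which was shown to be $m-$continuous in the paragraph immediately preceding the theorem. Declaring $\gamma^{-1}(\beta) = \{\hat{\beta}\}$ produces an $m-$function $\gamma^{-1} : C^{B^{A}} \rightrightarrows C^{A \times B}$ of the required form.

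To confirm this is a two-sided inverse, I would carry out the standard currying computation. For any $\alpha \in C^{A \times B}$ and $(a,b) \in A \times B$, $\widehat{\alpha_{adj}}(a,b) = \text{Ev}(\alpha_{adj}(a),b) = \alpha_{adj}(a)(b) = \alpha(a,b)$, giving $\gamma^{-1} \circ \gamma = 1$; symmetrically, for $\beta \in C^{B^{A}}$, $(\hat{\beta})_{adj}(a)(b) = \hat{\beta}(a,b) = \beta(a)(b)$, giving $\gamma \circ \gamma^{-1} = 1$. In particular $\gamma$ is surjective and one-to-one in the $m-$sense, because any distinct $\alpha, \alpha'$ must differ at some $(a,b)$ and the discrepancy propagates to $\alpha_{adj}(a) \neq \alpha'_{adj}(a)$.

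The remaining step, the $m-$continuity of $\gamma$ and $\gamma^{-1}$, is the technical heart of the argument. I would follow exactly the strategy of the previous proposition: characterise upper semicontinuity of $\gamma$ through openness of the sets $\gamma^{+}(\mathcal{V}) = \{\alpha : \gamma(\alpha) \subset \mathcal{V}\}$ for $\mathcal{V}$ subbasic open in $C^{B^{A}}$, translate via Exercise 5.3.46 of \cite{Geletu:2006} into an openness statement about the $m-$evaluation map, and transport back using the identity $\widehat{\alpha_{adj}} = \alpha$. Lower semicontinuity is handled symmetrically with $\gamma^{-}$ in place of $\gamma^{+}$, and the same argument applied to $\hat{\beta} = \text{Ev} \circ (\beta \times 1_{B})$ furnishes $m-$continuity of $\gamma^{-1}$.

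The principal obstacle will be pinning down the subbasic topology on the iterated $m-$function space $C^{B^{A}} = (C^{B})^{A}$ precisely enough that the curry/uncurry correspondence matches subbasic opens on the two sides cleanly. Once this bookkeeping is settled, the semicontinuity verifications parallel the preceding proposition and require no further homotopy-theoretic input.
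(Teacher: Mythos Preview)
Your approach is essentially the paper's: define the inverse by $\eta(\beta)=\hat{\beta}$, verify the two curry/uncurry identities $\widehat{\alpha_{adj}}=\alpha$ and $(\hat{\beta})_{adj}=\beta$, and conclude. The only difference is emphasis: you flag the $m-$continuity of $\gamma$ and $\gamma^{-1}$ as the ``technical heart'' and plan a subbasic-open argument, whereas the paper simply asserts it in a single clause (implicitly leaning on the preceding proposition and the paragraph on $\hat{\beta}$), so your version would in fact be more detailed than what the paper supplies.
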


\begin{proof}
	Let $\gamma : C^{A \times B} \rightrightarrows C^{B^{A}}$ and $\eta : C^{B^{A}} \rightrightarrows C^{A \times B}$ be defined as $\gamma(\alpha) = \alpha_{adj}$ and $\eta(\beta) = \hat{\beta}$, respectively. Since $\gamma$ and $\eta$ are $m-$continuous, it is enough to show that the left (right) inverse of $\gamma$ is $\eta$, i.e., $\gamma$ is one-to-one (surjective).
	We obtain that
	\begin{eqnarray*}
		\gamma \circ \eta(\beta) = \gamma(\hat{\beta}) = \hat{\beta}_{adj} = \beta
	\end{eqnarray*}
    and
    \begin{eqnarray*}
    	\eta \circ \gamma(\alpha) = \eta(\alpha_{adj}) = \hat{\alpha_{adj}} = \alpha.
    \end{eqnarray*}
    Thus, $\gamma$ is bijective.
\end{proof}

\begin{remark}
	The diagram in Definition \ref{def3} is equivalent to its adjoint formulation
	\begin{displaymath}
		\xymatrix{W \ar@/_/[ddr]_\alpha \ar@/^/[drr]^\beta \ar@{.>}[dr]|-{\eta}\\
			&A^{I}_{m} \ar[d]^{\text{Ev}_{0}} \ar[r]_{\rho_{\ast}} & B^{I}_{m}\ar[d]_{\text{Ev}_{0}} \\
			&A \ar[r]^\rho &B.}
	\end{displaymath}
\end{remark}

\begin{theorem}\label{teo4}
	If $J : C \rightrightarrows A$ is an $m-$cofibration, then $J_{\ast} : B^{A} \rightrightarrows B^{C}$ is an $m-$fibration for any space $B$.
\end{theorem}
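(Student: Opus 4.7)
The plan is to convert the $m-$hlp condition for $J_{\ast}$ into an $m-$homotopy extension problem via the exponential $m-$adjunction established in the previous theorem, and then invoke the $m-$HEP built into the notion of $m-$cofibration.

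Given a lifting square with $\alpha : W \rightrightarrows B^{A}$ and $\beta : W \times I \rightrightarrows B^{C}$ satisfying $J_{\ast} \circ \alpha = \beta \circ \nu_{0}$, I would first pass to $m-$adjoints: $\alpha$ corresponds to an $m-$continuous function $\hat{\alpha} : W \times A \rightrightarrows B$, and $\beta$ corresponds to $\hat{\beta} : W \times I \times C \rightrightarrows B$, which after permuting factors we read as $\hat{\beta} : W \times C \times I \rightrightarrows B$. Under the $m-$adjunction, the commutativity of the square translates into the initial-time compatibility
\begin{equation*}
\hat{\beta}(w,c,0) \;=\; \hat{\alpha}(w, J(c)) \qquad \text{for all } w \in W, \; c \in C,
\end{equation*}
which is precisely the data required to apply an $m-$HEP to the pair $(A,C)$ with target space $B$, parametrized by points of $W$.

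Since $J$ is an $m-$cofibration, the pair $(A,C)$ carries the $m-$HEP. Combined with the claim that the product with the identity $1_{W}$ preserves the $m-$HEP, applying it to $\hat{\alpha}$ and $\hat{\beta}$ produces an $m-$homotopy $\hat{\eta} : W \times A \times I \rightrightarrows B$ with $\hat{\eta}(w,a,0) = \hat{\alpha}(w,a)$ and $\hat{\eta}(w, J(c), t) = \hat{\beta}(w,c,t)$. Taking the $m-$adjoint of $\hat{\eta}$ (equivalently, its image under the $m-$homeomorphism $\gamma$) yields an $m-$continuous filler $\eta : W \times I \rightrightarrows B^{A}$, and the two required identities $\eta \circ \nu_{0} = \alpha$ and $J_{\ast} \circ \eta = \beta$ follow immediately by taking adjoints of the corresponding identities for $\hat{\eta}$.

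The main obstacle I expect is the parametric step: verifying that if $(A,C)$ has the $m-$HEP, then so does $(W \times A, W \times C)$. In the classical single-valued setting this is routine, but in the $m-$valued setting every $m-$continuity claim must be checked by hand from upper and lower semicontinuity, and a slice-wise application of the $m-$HEP must be shown to fit together into an $m-$continuous total extension. A secondary point, which should be handled before the adjunction is applied, is to make explicit that $J_{\ast} : B^{A} \rightrightarrows B^{C}$ corresponds under adjunction to literal precomposition by $J$ in the $A$-variable; since the composition of $m-$functions can behave subtly with respect to the upper and lower inverses from Section \ref{sec:1}, this needs to be spelled out rather than left implicit.
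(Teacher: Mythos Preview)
Your approach is essentially the paper's: convert the $m$-hlp square for $J_{\ast}$ into an $m$-HEP square for $1_{W}\times J$ via the exponential $m$-adjunction, obtain the extended $m$-homotopy $\hat{\eta}:W\times A\times I\rightrightarrows B$, and adjoint back to the filler $\eta:W\times I\rightrightarrows B^{A}$. The paper presents this as a single pair of commutative diagrams with almost no commentary, so your write-up is in fact more explicit than the published proof.

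The ``parametric step'' you flag --- that $J$ being an $m$-cofibration should force $(W\times A,\,W\times C)$ to have the $m$-HEP --- is not addressed separately in the paper either: it simply asserts that the filler $\eta$ exists ``since $J$ is an $m$-cofibration,'' drawing the extension diagram with vertical maps $1\times J$ and $1\times J\times 1$. So your concern is legitimate but does not represent a divergence from the paper's argument; both rely on this product stability implicitly. Your secondary point about making the adjunction identity $J_{\ast}\leftrightarrow$ precomposition-by-$J$ explicit is likewise a detail the paper omits.
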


\begin{proof}
	Since $J$ is an $m-$cofibration, there exists an $m-$homotopy $\eta : W \times A \times I \rightrightarrows B$ such that the following diagram is commutative.
	\begin{displaymath}
		\xymatrix{W \times C \ar@{^{(}->}[rr] \ar[dd]_{1 \times J} & & W \times C \times I \ar[dd]^{1 \times J \times 1} \ar[dl]_{\beta} \\
			& B &\\
			W \times A \ar@{^{(}->}[rr] \ar[ur]^{\alpha} & & W \times A \times I \ar@{.>}[ul]_{\eta}}
	\end{displaymath}
    By adjunction, we have that the diagram
    \begin{displaymath}
    	\xymatrix{W \ar@/_/[ddr]_{\alpha^{'}} \ar@/^/[drr]^{\beta^{'}} \ar@{.>}[dr]|-{\eta^{'}}\\
    		&C^{A_{m}^{I}} \ar[d]^{\text{Ev}_{0}} \ar[r] & B^{C_{m}^{I}}\ar[d]_{\text{Ev}_{0}} \\
    		&B^{A} \ar[r]^{J_{\ast}} &B^{C}.}
    \end{displaymath}
    is commutative. The $m-$adjoint of $\eta$ is $\eta^{'}$, and thus, the $m-$hlp holds for $J_{\ast}$.
\end{proof}

\begin{example}
	Consider the pair $(\partial I,I)$. Since it has $m-$hep, $J : \partial I \rightrightarrows I$ is an $m-$cofibration. By Theorem \ref{teo4}, we observe that
	\begin{eqnarray*}
		&&\Pi: X^{I}_{m} \rightrightarrows X \times X \\ 
		&&\hspace*{0.8cm} \alpha \mapsto \Pi(\alpha) = \alpha(0) \times \alpha(1)
	\end{eqnarray*}
    is an $m-$fibration. 
\end{example}   

\section{Topological Multi-Complexity of Spaces}
\label{sec:3}

\quad Let $\rho : A \rightrightarrows B$ be an $m-$fibration. Then msecat$(\rho)$ of $\rho$ is the least positive integer $s$ provided that the following properties hold:
\begin{itemize}
	\item The open covering $\{C_{1},\cdots,C_{s}\}$ of $B$ exists.
	\item $\rho$ has an $m-$continuous $m-$section $\delta_{j} : C_{j} \rightrightarrows A$ on each $C_{j}$, $j \in \{1,\cdots,s\}$, i.e., \[\{b\} = \displaystyle \bigcap_{a \in \delta_{j}(b)}\rho(a)\] for all $b \in C_{j}$.
\end{itemize} 
In the case that such a covering does not exist, the msecat$(\rho)$ is $\infty$.  

\quad Let $X^{I}_{m}$ denote the space of all $m-$continuous paths in an $m-$pathwise connected space $X$. Define an $m-$fibration \[\Pi: X^{I}_{m} \rightrightarrows X \times X\] by $\Pi(\alpha) = \displaystyle \alpha(0) \times \alpha(1)$ for any $m-$path $\alpha$ in $X$. Then the topological multi-complexity of $X$ (denoted by tmc$(X)$) is the least positive integer $s$ provided that the following properties hold:
\begin{itemize}
	\item The open covering $\{D_{1},\cdots,D_{s}\}$ of $X \times X$ exists.
	\item $\Pi$ has an $m-$continuous $m-$section $\delta_{j} : D_{j} \rightrightarrows X^{I}_{m}$ on each $D_{j}$ with \linebreak$j \in \{1,\cdots,s\}$, i.e., \[\{(a,b)\} = \displaystyle \bigcap_{\alpha \in \delta_{j}(a,b)}\Pi(\alpha)\] for all $(a,b) \in D_{j}$. 
\end{itemize} 
In the case that such a covering does not exist, the topological multi-complexity is $\infty$. Alternatively, one has tmc$(X) =$ msecat$(\Pi)$.

\quad In this paper, we always consider $X$ as $m-$pathwise connected when we want to compute tmc$(X)$ unless otherwise stated.

\begin{definition}\label{def1}
	Assume that $\alpha$, $\beta : X \rightrightarrows Y$ are two $m-$continuous functions. Then an $m-$homotopic distance (multi-homotopic distance) between $\alpha$ and $\beta$ is the least positive integer $s$ provided that the following properties hold:
	\begin{itemize}
		\item The open covering $\{C_{1},\cdots,C_{s}\}$ of $X$ exists.
		\item For all $i \in \{1,\cdots,s\}$, $\alpha|_{C_{i}}$ is $m-$homotopic to $\beta|_{C_{i}}$.  
	\end{itemize}
    In the case that such a covering does not exist, the $m-$homotopic distance is $\infty$. 
\end{definition}

\quad The $m-$homotopic distance is denoted by D$^{m}(\alpha,\beta)$. The following results are easily obtained from Definition \ref{def1}:
\begin{itemize}
	\item[\textbf{i)}] The order of $\alpha$ and $\beta$ is not taken into account, i.e., D$^{m}(\alpha,\beta)$ = D$^{m}(\alpha,\beta)$.
	\item[\textbf{ii)}] That the $m-$homotopic distance between $\alpha$ and $\beta$ is $1$ means that $\alpha$ and $\beta$ are $m-$homotopic, i.e., D$^{m}(\alpha,\beta) = 1 \ \Leftrightarrow \ \alpha \simeq_{m} \beta$.
	\item[\textbf{iii)}] $\alpha_{1} \simeq_{m} \alpha_{2}$ and $\beta_{1} \simeq_{m} \beta_{2}$ imply that D$^{m}(\alpha_{1},\beta_{1}) =$ D$^{m}(\alpha_{2},\beta_{2})$.
	\item[\textbf{iv)}] D$^{m}(\eta \circ \alpha,\eta \circ \beta) \leq$ D$^{m}(\alpha,\beta)$ and D$^{m}(\alpha \circ \mu,\beta \circ \mu) \leq$ D$^{m}(\alpha,\beta)$ for any corresponding $m-$continuous functions.
\end{itemize}

\begin{theorem}\label{teo1}
	Assume that $\alpha$, $\beta: X \rightrightarrows Y$ are two $m-$continuous functions such that the diagram
	$$\xymatrix{
		P \ar@<1ex>[r]^{\Pi_{2}} \ar@<-1ex>[r] \ar@<1ex>[d] \ar@<-1ex>[d]_{PB} &
		Y^{I}_{m} \ar@<-1ex>[d] \ar@<1ex>[d]^{\Pi} \\
		X \ar@<-1ex>[r]_{(\alpha,\beta)} \ar@<1ex>[r] & Y \times Y}$$
	is commutative, where $PB$ is the $m-$pullback of $\Pi$ by $(\alpha,\beta)$. Then we have that D$^{m}(\alpha,\beta) =$ msecat$(PB)$.
\end{theorem}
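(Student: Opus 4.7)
The plan is to prove the equality in both directions by exploiting the universal property of the $m$-pullback $PB$ together with the exponential adjunction already established in the paper. Conceptually, an $m$-section of $PB$ on an open set $C \subseteq X$ is the same data as an $m$-continuous choice, for each $x \in C$, of an $m$-path in $Y$ from $\alpha(x)$ to $\beta(x)$, which is the same data as an $m$-homotopy $C \times I \rightrightarrows Y$ between $\alpha|_{C}$ and $\beta|_{C}$. I would split the argument into the two inequalities that result from this dictionary.

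First I would prove $D^{m}(\alpha,\beta) \leq \mathrm{msecat}(PB)$. Suppose $\{C_{1},\ldots,C_{s}\}$ is an open cover of $X$ with $m$-continuous $m$-sections $\delta_{j} : C_{j} \rightrightarrows PB$ of the first projection $\Pi_{1} : PB \rightrightarrows X$ in the sense of the definition recalled above (i.e., $\{x\} = \bigcap_{z \in \delta_{j}(x)} \Pi_{1}(z)$ for all $x \in C_{j}$). Composing with $\Pi_{2}$ gives an $m$-continuous function $\Pi_{2} \circ \delta_{j} : C_{j} \rightrightarrows Y_{m}^{I}$, whose $m$-adjoint (via the exponential correspondence developed earlier) is an $m$-continuous function $H_{j} : C_{j} \times I \rightrightarrows Y$. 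Commutativity of the pullback square forces $\Pi \circ (\Pi_{2} \circ \delta_{j}) = (\alpha,\beta)|_{C_{j}}$, which translates under the adjunction to $H_{j}(x,0) = \alpha(x)$ and $H_{j}(x,1) = \beta(x)$, so $\alpha|_{C_{j}} \simeq_{m} \beta|_{C_{j}}$. Hence $\{C_{j}\}$ witnesses $D^{m}(\alpha,\beta) \leq s$.

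Next I would prove $\mathrm{msecat}(PB) \leq D^{m}(\alpha,\beta)$. Suppose $\{C_{1},\ldots,C_{s}\}$ is an open cover of $X$ such that on each $C_{i}$ there is an $m$-homotopy $H_{i} : C_{i} \times I \rightrightarrows Y$ with $H_{i}(x,0) = \alpha(x)$ and $H_{i}(x,1) = \beta(x)$. Taking the $m$-adjoint $(H_{i})_{adj} : C_{i} \rightrightarrows Y_{m}^{I}$ produces an $m$-continuous map whose composition with $\Pi$ coincides with $(\alpha,\beta)|_{C_{i}}$. By the universal property of the $m$-pullback, the pair $(\mathrm{incl}_{C_{i}}, (H_{i})_{adj})$ factors uniquely through $PB$ to give an $m$-continuous function $\delta_{i} : C_{i} \rightrightarrows PB$ with $\Pi_{1} \circ \delta_{i} = \mathrm{incl}_{C_{i}}$ and $\Pi_{2} \circ \delta_{i} = (H_{i})_{adj}$. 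I would then verify the sectional condition $\{x\} = \bigcap_{z \in \delta_{i}(x)} \Pi_{1}(z)$, which follows immediately from $\Pi_{1} \circ \delta_{i}$ being the $m$-inclusion on $C_{i}$ (since $m$-inclusions satisfy this property by construction).

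The main obstacle I anticipate is making the dictionary between $m$-sections of $PB$ and $m$-homotopies fully rigorous in the $m$-valued setting, because both the notion of ``section'' (defined via the intersection condition $\{x\} = \bigcap_{z \in \delta(x)}\Pi_{1}(z)$) and the notion of ``pullback'' $\gamma^{\ast}A \subseteq B' \times A$ are subtler here than in the single-valued case. In particular, I would need to check that the universal property of the $m$-pullback used in the second direction really does produce an $m$-continuous factorization, and that the sectional intersection condition is preserved under the adjunction. Once this bookkeeping is handled, the argument reduces to the clean adjunction already formulated in the remark following Definition \ref{def3}.
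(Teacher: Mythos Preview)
Your proposal is correct and follows essentially the same approach as the paper's own proof: the paper simply says the argument is parallel to Theorem~2.7 of \cite{VirgosLois:2022}, the one point requiring care being the passage between an $m$-homotopy $C\times I\rightrightarrows Y$ and an $m$-continuous map $C\rightrightarrows Y^{I}_{m}$, which holds because the compact-open topology on $Y^{I}_{m}$ is splitting. You have spelled out the two inequalities that this classical argument comprises and correctly identified the adjunction as the crux; your anticipated ``obstacle'' is precisely the single issue the paper flags, and the paper resolves it by the citation to \cite{GuptaSarma:2017} rather than the internal adjoint statements.
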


\begin{proof}
	The proof is parallel with Theorem 2.7 of \cite{VirgosLois:2022}. However, there is only one point to note here: According to Mac\'{i}as-Virg\'{o}s and Mosquera-Lois, the homotopy $U \times I \rightarrow Y$ can be rewritten as the function $U \rightarrow Y^{I}$. This is also possible for $m-$functions because the compact-open topology on $Y^{I}_{m}$ is splitting (see \cite{GuptaSarma:2017} for more details on the compact-open and admissible topologies).
\end{proof}

\begin{corollary}
	For an $m-$projection $\rho_{i} : X \times X \rightrightarrows X$ with each $i \in \{1,2\}$, we have tmc$(X) =$ D$^{m}(\rho_{1},\rho_{2})$. 
\end{corollary}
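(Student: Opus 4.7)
The plan is to deduce the corollary directly from Theorem \ref{teo1} by exhibiting the pullback diagram in a particularly simple form when $\alpha = \rho_1$ and $\beta = \rho_2$. First, I would observe that the pair $(\rho_1,\rho_2) : X \times X \rightrightarrows X \times X$ sends $(a,b) \mapsto (\{a\},\{b\})$, so it coincides with the $m$-identical map $1_{X \times X}$. Thus the bottom row of the square appearing in Theorem \ref{teo1} is the identity map.

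Next, I would apply Theorem \ref{teo1} to this specific choice to obtain
\begin{equation*}
\mathrm{D}^{m}(\rho_{1},\rho_{2}) \;=\; \mathrm{msecat}(PB),
\end{equation*}
where $PB$ is the $m$-pullback of $\Pi: X^{I}_{m} \rightrightarrows X \times X$ along $(\rho_{1},\rho_{2}) = 1_{X \times X}$. The key structural fact I would invoke here is that pulling an $m$-fibration back along an $m$-identical map reproduces the same $m$-fibration up to $m$-homeomorphism: unwinding the definition of the $m$-pullback given in Section \ref{sec:2}, one has $1_{X\times X}^{\ast}X^{I}_{m} = \{((a,b),\alpha) : (a,b) = \Pi(\alpha)\}$, which is $m$-homeomorphic to $X^{I}_{m}$ via the second $m$-projection $\Pi_{2}$, with inverse $\alpha \mapsto (\Pi(\alpha),\alpha)$. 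Under this identification $\Pi_{1}$ corresponds to $\Pi$, so msecat$(PB) =$ msecat$(\Pi)$.

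Finally, I would chain the equalities
\begin{equation*}
\mathrm{D}^{m}(\rho_{1},\rho_{2}) \;=\; \mathrm{msecat}(PB) \;=\; \mathrm{msecat}(\Pi) \;=\; \mathrm{tmc}(X),
\end{equation*}
where the last equality is precisely the alternative characterisation of tmc$(X)$ noted at the end of the definition. The only delicate point I anticipate is the verification that the $m$-pullback along an $m$-identical map genuinely agrees with the original $m$-fibration in the multi-valued setting; this requires checking that the $m$-sections of $PB$ over an open set $U \subseteq X \times X$ are in bijective correspondence with the $m$-sections of $\Pi$ over $U$, which follows from the intersection condition $\{(a,b)\} = \bigcap_{\alpha \in \delta(a,b)} \Pi(\alpha)$ used in the definition of tmc. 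Everything else is a direct substitution into Theorem \ref{teo1}.
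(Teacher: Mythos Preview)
Your proposal is correct and follows essentially the same route as the paper: specialize Theorem~\ref{teo1} with $\alpha=\rho_1$, $\beta=\rho_2$, note that $(\rho_1,\rho_2)=1_{X\times X}$, and conclude that $PB$ coincides with $\Pi$ so that $\mathrm{D}^{m}(\rho_1,\rho_2)=\mathrm{msecat}(\Pi)=\mathrm{tmc}(X)$. The paper states $PB=\Pi$ in one line, whereas you spell out the identification of the pullback along the identity with the original fibration, but the argument is the same.
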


\begin{proof}
	In Theorem \ref{teo1}, choose $\alpha = \rho_{1}$ and $\beta = \rho_{2}$ spesifically. Then we obtain $(\alpha,\beta)$ equals $1_{X \times X}$ and $PB = \Pi$.
\end{proof}

\begin{theorem}
	$X$ is an $m-$contractible space if and only if tmc$(X) = 1$.
\end{theorem}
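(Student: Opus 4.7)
The plan is to reduce the biconditional to the simpler equivalence that $X$ is $m-$contractible if and only if $\rho_{1} \simeq_{m} \rho_{2}$, where $\rho_{1},\rho_{2} : X \times X \rightrightarrows X$ are the two $m-$projections. This reduction is immediate from the preceding corollary tmc$(X) = $ D$^{m}(\rho_{1},\rho_{2})$ together with property \textbf{ii)}, which identifies D$^{m} = 1$ with $m-$homotopy.

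For the forward direction, I would start with an $m-$homotopy $1_{X} \simeq_{m} c$ to an $m-$constant map $c$ of value $X_{0} \subseteq X$, and apply property \textbf{iv)} with $\rho_{i}$ as the right factor to obtain $\rho_{i} \simeq_{m} c \circ \rho_{i}$ for each $i \in \{1,2\}$. Under the standard composition convention $(\beta \circ \alpha)(a) = \bigcup_{b \in \alpha(a)}\beta(b)$, one computes $(c \circ \rho_{i})(x_{1},x_{2}) = X_{0}$, so $c \circ \rho_{1}$ and $c \circ \rho_{2}$ collapse to the very same $m-$constant map, and transitivity of $\simeq_{m}$ yields $\rho_{1} \simeq_{m} \rho_{2}$.

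For the backward direction, I would fix any basepoint $x_{0} \in X$ (available because $X$ is $m-$pathwise connected) and consider the $m-$continuous function $\nu : X \rightrightarrows X \times X$ given by $\nu(x) = \{(x,x_{0})\}$. Property \textbf{iv)} applied to pre-composition by $\nu$ then gives D$^{m}(\rho_{1} \circ \nu, \rho_{2} \circ \nu) \leq $ D$^{m}(\rho_{1},\rho_{2}) = 1$, and a direct evaluation yields $\rho_{1} \circ \nu = 1_{X}$ while $\rho_{2} \circ \nu$ is the $m-$constant map with value $\{x_{0}\}$. Hence $1_{X}$ is null $m-$homotopic, so $X$ is $m-$contractible.

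The argument is almost purely formal, so no step will be a serious obstacle; the only delicate point to verify carefully is the composition semantics of $m-$functions, namely that $c \circ \rho_{i}$ and $\rho_{i} \circ \nu$ really specialize as described above. Once these set-theoretic identities are checked, everything else follows from the already-recorded properties of D$^{m}$ and the corollary relating it to tmc$(X)$.
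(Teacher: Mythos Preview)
Your argument is correct. For the forward direction you take the same route as the paper: both projections are shown to be null $m-$homotopic (the paper cites an external result for this, while you derive it directly from property \textbf{iv)} applied to $1_{X} \simeq_{m} c$), and then property \textbf{iii)} yields D$^{m}(\rho_{1},\rho_{2}) = $ D$^{m}(\zeta,\zeta) = 1$.

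The notable difference is that the paper's proof, despite the biconditional statement, only treats the implication ``$m-$contractible $\Rightarrow$ tmc$(X)=1$''; the converse is simply not addressed there. Your backward argument via $\nu(x) = \{(x,x_{0})\}$ and property \textbf{iv)} is exactly the natural way to fill this gap (and is the same idea the paper later uses to prove catm$(X) \leq$ tmc$(X)$). So your proposal is not merely a different route but a genuinely more complete proof of the stated theorem.
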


\begin{proof}
	Let $X$ be $m-$contractible. Then the $m-$projection $\rho_{i} : X \times X \rightrightarrows X$ for $i \in \{1,2\}$ is null $m-$homotopic by Theorem 5 of \cite{KaracaOzkan:2023}. It follows that $$\text{D}^{m}(\rho_{1},\rho_{2}) = \text{D}^{m}(\zeta,\zeta),$$ where $\zeta$ is an $m-$constant map. Hence, tmc$(X) = 1$.
\end{proof}

\begin{example}
	$\mathbb{C}$ is $m-$contractible by considering an $m-$homotopy $\eta : \mathbb{C} \times I \rightrightarrows \mathbb{C}$ defined by $\eta(z,t) = \{(1-t)z + tz_{0}\}$ for a constant complex number $z_{0}$. Thus, tmc$(\mathbb{C}) = 1$.
\end{example}

\begin{figure}[h]
	\centering
	\includegraphics[width=0.50\textwidth]{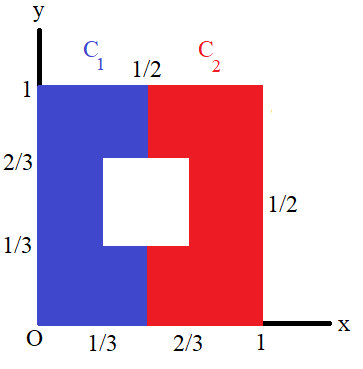}
	\caption{$m-$contractible subsets $C_{1}$ and $C_{2}$ of $X \times X$.}
	\label{fig:1}
\end{figure}

\begin{example}
	Consider the set $X = ([0,1] \times [0,1]) - ((1/3,2/3) \times (1/3,2/3))\subset \mathbb{R}^{2}$. Since it is not $m-$contractible, tmc$(X)$ is not $1$. $X \times X$ can be covered by two open sets
	\begin{eqnarray*}
		C_{1} = \{(x,y) \in X \times X : x, y \in ([0,1/2] \times [0,1]) - ((1/3,1/2) \times (1/3,2/3))\}
	\end{eqnarray*}
    and
    \begin{eqnarray*}
    	C_{2} = \{(x,y) \in X \times X : x, y \in ([1/2,1] \times [0,1]) - ((1/2,2/3) \times (1/3,2/3))\}.
    \end{eqnarray*}
    Moreover, for all $i \in \{1,\cdots,2\}$, one has \[\rho_{1}|_{C_{i}} \simeq_{m} \rho_{2}|_{C_{i}}\] by considering that $C_{1}$ and $C_{2}$ are $m-$contractible (see Figure \ref{fig:1}). This shows that tmc$(X) = 2$.
\end{example}

\quad By considering \textbf{iv)} (with \textbf{ii)}), if $\eta$ has a left $m-$homotopy inverse and $\nu$ has a right $m-$homotopy inverse, then the equalities \[\text{D}^{m}(\eta \circ \alpha,\eta \circ \beta) = \text{D}^{m}(\alpha,\beta)\] and \[\text{D}^{m}(\alpha \circ \nu,\beta \circ \nu) = \text{D}^{m}(\alpha,\beta)\] hold. Therefore, given $m-$homotopy equivalences $\eta_{1} : X_{1} \rightrightarrows Y_{1}$ and $\eta_{2} : X_{2} \rightrightarrows Y_{2}$ for which $\eta_{1} \circ \alpha \circ \eta_{2} \simeq_{m} \alpha^{'}$ and $\eta_{1} \circ \beta \circ \eta_{2} \simeq_{m} \beta^{'}$ for any functions $\alpha$, $\beta : Y_{2} \rightrightarrows X_{1}$ and $\alpha^{'}$, $\beta^{'} : X_{2} \rightrightarrows Y_{1}$, we have that \[\text{D}^{m}(\alpha,\beta) = \text{D}^{m}(\alpha^{'},\beta^{'}).\]  Consequently, we obtain an important property as follows.

\begin{theorem}\label{teo2}
   tmc$(X)$ is an $m-$homotopy invariant.
\end{theorem}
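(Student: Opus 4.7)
The plan is to reduce the statement to the invariance principle for $m$-homotopic distance spelled out just before the theorem: if $\eta_{1}$ and $\eta_{2}$ are $m$-homotopy equivalences and $\eta_{1} \circ \alpha \circ \eta_{2} \simeq_{m} \alpha^{'}$ and $\eta_{1} \circ \beta \circ \eta_{2} \simeq_{m} \beta^{'}$, then $\mathrm{D}^{m}(\alpha,\beta) = \mathrm{D}^{m}(\alpha^{'},\beta^{'})$. Combined with the corollary $\mathrm{tmc}(X) = \mathrm{D}^{m}(\rho_{1}^{X},\rho_{2}^{X})$ where $\rho_{i}^{X} : X \times X \rightrightarrows X$ denotes the $i$-th $m$-projection, this will give the invariance almost immediately.

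First, I would fix spaces $X$ and $Y$ with the same $m$-homotopy type, so there exist $m$-continuous functions $f : X \rightrightarrows Y$ and $g : Y \rightrightarrows X$ with $g \circ f \simeq_{m} 1_{X}$ and $f \circ g \simeq_{m} 1_{Y}$. From these I would build the product $m$-functions $f \times f : X \times X \rightrightarrows Y \times Y$ and $g \times g : Y \times Y \rightrightarrows X \times X$, and verify (using the componentwise $m$-homotopy $(H_{1} \times H_{2})((x_{1},x_{2}),t) = H_{1}(x_{1},t) \times H_{2}(x_{2},t)$) that they constitute an $m$-homotopy equivalence between $X \times X$ and $Y \times Y$.

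Next, I would check the naturality identity $\rho_{i}^{X} \circ (g \times g) = g \circ \rho_{i}^{Y}$ for $i \in \{1,2\}$, which follows directly from $(g \times g)(y_{1},y_{2}) = g(y_{1}) \times g(y_{2})$ together with the definition of the $m$-projection. Post-composing with $f$ and using $f \circ g \simeq_{m} 1_{Y}$ then yields
\begin{equation*}
f \circ \rho_{i}^{X} \circ (g \times g) \; = \; (f \circ g) \circ \rho_{i}^{Y} \; \simeq_{m} \; \rho_{i}^{Y}
\end{equation*}
for both $i = 1$ and $i = 2$. Applying the invariance principle with $\eta_{1} = f$, $\eta_{2} = g \times g$, $\alpha = \rho_{1}^{X}$, $\beta = \rho_{2}^{X}$, $\alpha^{'} = \rho_{1}^{Y}$, $\beta^{'} = \rho_{2}^{Y}$ produces $\mathrm{D}^{m}(\rho_{1}^{X},\rho_{2}^{X}) = \mathrm{D}^{m}(\rho_{1}^{Y},\rho_{2}^{Y})$, which by the corollary is exactly $\mathrm{tmc}(X) = \mathrm{tmc}(Y)$.

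The main obstacle is the verification that the product of two $m$-homotopy equivalences is again an $m$-homotopy equivalence, because $m$-composition involves taking unions of image-sets rather than evaluating at a single point, so one must check that $(g \times g) \circ (f \times f)$ is $m$-homotopic to the $m$-identical map on $X \times X$ rather than merely being a setwise equality. Once this product construction is in hand, the remainder is a short bookkeeping exercise using property \textbf{iv}).
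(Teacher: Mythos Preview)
Your proposal is correct and follows exactly the route the paper intends: the paragraph immediately preceding the theorem establishes the invariance principle $\mathrm{D}^{m}(\alpha,\beta) = \mathrm{D}^{m}(\alpha',\beta')$ for $m$-homotopy equivalences $\eta_{1},\eta_{2}$, and the theorem is simply the specialization to $\eta_{1}=f$, $\eta_{2}=g\times g$, $\alpha=\rho_{1}^{X}$, $\beta=\rho_{2}^{X}$ together with the corollary $\mathrm{tmc}(X)=\mathrm{D}^{m}(\rho_{1},\rho_{2})$. You have supplied the details (naturality of the projections and the product-of-$m$-homotopies construction) that the paper leaves implicit, but the argument is the same.
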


\section{Topological Multi-Complexity of Multi-Valued Fibrations}
\label{sec:4}

\begin{definition}\label{def2}
	Let $\alpha : X \rightrightarrows Y$ be a surjective $m-$fibration between two pathwise $m-$connected spaces $X$ and $Y$. Given an $m-$fibration $\Pi_{\alpha} : X^{I}_{m} \rightrightarrows X \times Y$ defined by $\Pi_{\alpha}(\rho) = \rho(0) \times (\alpha \circ \rho
	(1))$, the topological multi-complexity of $\alpha$ (denoted by tmc$(\alpha)$) is msecat($\Pi_{\alpha}$).
\end{definition}

\quad Note that $\Pi_{\alpha}$ is an $m-$fibration when $\alpha$ is an $m-$fibration because $\Pi_{\alpha}$ can be rewritten as $(1_{X} \times \alpha) \circ \Pi$. Indeed, $1_{X}$, $\alpha$, and $\Pi$ are all $m-$fibrations.

\quad By Theorem \ref{teo1}, we have the following result:

\begin{corollary}
	Let $\alpha : X \rightrightarrows Y$ be a surjective $m-$fibration between pathwise $m-$connected spaces $X$ and $Y$. Then tmc$(\alpha) =$ D$^{m}(\alpha \circ \rho_{1},\rho_{2})$ for the $m-$projection map $\rho_{i} : X \times X \rightrightarrows X$ with $i \in \{1,2\}$. 
\end{corollary}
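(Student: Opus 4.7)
The plan is to invoke Theorem \ref{teo1} with the specific pair $\alpha \circ \rho_{1}$, $\rho_{2}$, paralleling the template used for the previous corollary on $\text{tmc}(X)$. For the $m$-homotopic distance on the right-hand side to be well-defined the two $m$-functions must share a codomain, so the projections must be read as $\rho_{1} : X \times Y \rightrightarrows X$ and $\rho_{2} : X \times Y \rightrightarrows Y$ (there is a slight mismatch with the literal ``$X \times X$'' in the statement). Then $\alpha \circ \rho_{1}, \rho_{2} : X \times Y \rightrightarrows Y$ and their joint $m$-function $(\alpha \circ \rho_{1}, \rho_{2})$ lands in $Y \times Y$, matching the endpoint $m$-fibration $\Pi$ appearing in Theorem \ref{teo1}.

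First, applying Theorem \ref{teo1} with this choice yields $D^{m}(\alpha \circ \rho_{1}, \rho_{2}) = \text{msecat}(PB)$, where $PB$ is the $m$-pullback of $\Pi : Y^{I}_{m} \rightrightarrows Y \times Y$ along $(\alpha \circ \rho_{1}, \rho_{2}) : X \times Y \rightrightarrows Y \times Y$. Concretely $PB$ sits over $X \times Y$ with total space the $m$-paths $\gamma$ in $Y$ with $\gamma(0) = \alpha(x)$ and $\gamma(1) = y$. The remaining task is to identify $\text{msecat}(PB)$ with $\text{msecat}(\Pi_{\alpha}) = \text{tmc}(\alpha)$ of Definition \ref{def2}.

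The core step is a two-sided sectional-category comparison using the hypothesis that $\alpha$ is a surjective $m$-fibration. For the easier direction, given an $m$-section $\delta : U \rightrightarrows X^{I}_{m}$ of $\Pi_{\alpha}$ on an open $U \subseteq X \times Y$, the post-composition $\alpha \circ \delta$ is an $m$-section of $PB$ on $U$ because $\alpha(\delta(x,y)(0)) = \alpha(x)$ and $\alpha(\delta(x,y)(1)) = y$ by the endpoint conditions for $\Pi_{\alpha}$; this gives $\text{msecat}(PB) \leq \text{msecat}(\Pi_{\alpha})$. For the opposite inequality, an $m$-section $t : U \rightrightarrows PB$ prescribes for each $(x,y) \in U$ an $m$-path $\gamma_{x,y}$ in $Y$ starting at $\alpha(x)$; the commutative square with top edge $U \rightrightarrows X$, $(x,y) \mapsto \{x\}$, and bottom edge $U \times I \rightrightarrows Y$, $(x,y,s) \mapsto \gamma_{x,y}(s)$, admits a filler $\eta : U \times I \rightrightarrows X$ by Definition \ref{def3}, and the $m$-adjoint of $\eta$ is the desired $m$-section of $\Pi_{\alpha}$ on $U$. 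Chaining equalities then gives $\text{tmc}(\alpha) = \text{msecat}(\Pi_{\alpha}) = \text{msecat}(PB) = D^{m}(\alpha \circ \rho_{1}, \rho_{2})$.

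The main obstacle I anticipate is the multi-valued bookkeeping in the backward direction of the middle step: the filler $\eta$ returned by the $m$-hlp is a set-valued lift, and one must check that its adjoint $U \rightrightarrows X^{I}_{m}$ satisfies the intersection-based $m$-section condition from Section \ref{sec:3}, i.e.\ $\{(x,y)\} = \bigcap_{\gamma \in \eta_{adj}(x,y)} \Pi_{\alpha}(\gamma)$, rather than merely pointwise containment. The surjectivity hypothesis on $\alpha$ enters precisely here, guaranteeing that the second coordinate of this intersection collapses to $\{y\}$ so that the $m$-section axiom is met.
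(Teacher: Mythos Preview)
Your approach and the paper's agree on the starting point: invoke Theorem \ref{teo1}. In fact the paper offers nothing beyond that single citation and records no proof for this corollary at all. Your write-up therefore goes strictly further than the paper's, in two respects.

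First, you correctly flag the domain mismatch in the statement: with $\rho_{i}:X\times X\rightrightarrows X$ the pair $(\alpha\circ\rho_{1},\rho_{2})$ does not land in a common codomain, and the later use in Proposition \ref{prop1} (where $\eta:Y\rightrightarrows\{y_{0}\}$ is composed with $\rho_{2}$) confirms that the intended projections are $\rho_{1}:X\times Y\rightrightarrows X$ and $\rho_{2}:X\times Y\rightrightarrows Y$. The paper silently relies on this reading.

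Second, and more substantially, you supply the step the paper omits: Theorem \ref{teo1} only yields $D^{m}(\alpha\circ\rho_{1},\rho_{2})=\text{msecat}(PB)$ for the pullback $PB$ of $\Pi:Y^{I}_{m}\rightrightarrows Y\times Y$ along $(\alpha\circ\rho_{1},\rho_{2})$, whereas $\text{tmc}(\alpha)=\text{msecat}(\Pi_{\alpha})$ with $\Pi_{\alpha}:X^{I}_{m}\rightrightarrows X\times Y$. These are different $m$-fibrations over $X\times Y$ (paths in $Y$ versus paths in $X$), and identifying their $m$-Schwarz genera is precisely where the hypothesis that $\alpha$ is an $m$-fibration is needed. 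Your two-sided comparison, post-composing with $\alpha$ in one direction and lifting via the $m$-hlp of Definition \ref{def3} in the other, is the right argument and is not recorded anywhere in the paper. Your caveat about verifying the intersection-style $m$-section axiom for the lifted section is a genuine technical point in the multi-valued setting; it is glossed over in the paper as well, so you are not missing anything the authors provide.
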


\quad From Definition \ref{def2}, it is clear to have the following results as follows.
\begin{itemize}
	\item[\textbf{v)}] $\alpha = 1_{X} : X \rightrightarrows X$ implies that tmc$(1_{X}) =$ tmc$(X)$.
	\item[\textbf{vi)}] For any two $m-$homotopic $m-$fibrations $\alpha$, $\beta : X \rightrightarrows Y$, we have that tmc$(\alpha) =$ tmc$(\beta)$.
\end{itemize}

\begin{proposition}\label{prop1}
	Given a surjective $m-$fibration $\alpha : X \rightrightarrows Y$, tmc$(\alpha) = 1$ provided that $Y$ is $m-$contractible.
\end{proposition}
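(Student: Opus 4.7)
The plan is to reduce the statement to a computation of an $m-$homotopic distance and then exploit the $m-$contractibility hypothesis to witness the required homotopy globally. By the corollary immediately preceding the proposition, tmc$(\alpha) =$ D$^{m}(\alpha \circ \rho_{1}, \rho_{2})$ for the relevant $m-$projection maps, so by property \textbf{ii)} of the $m-$homotopic distance it suffices to show that $\alpha \circ \rho_{1}$ and $\rho_{2}$ are $m-$homotopic on the entire product space, i.e., that the open cover in Definition \ref{def1} can be taken to consist of the single open set which is the whole domain.

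To produce this global $m-$homotopy, first I would apply the $m-$contractibility of $Y$ to obtain an $m-$homotopy $\eta : Y \times I \rightrightarrows Y$ from $1_{Y}$ to an $m-$constant map $\zeta$. Composing $\eta$ after $\alpha \circ \rho_{1}$ and after $\rho_{2}$ separately yields two $m-$homotopies whose source maps are $\alpha \circ \rho_{1}$ and $\rho_{2}$, and whose target maps are the $m-$functions $\zeta \circ \alpha \circ \rho_{1}$ and $\zeta \circ \rho_{2}$, respectively. Since $\zeta$ is $m-$constant, both targets collapse to the same $m-$constant map on the product. Concatenating the first $m-$homotopy with the reverse of the second along the $t-$parameter supplies a single $m-$homotopy from $\alpha \circ \rho_{1}$ to $\rho_{2}$ defined on the whole product, yielding D$^{m}(\alpha \circ \rho_{1}, \rho_{2}) = 1$ and hence tmc$(\alpha) = 1$.

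The main obstacle I anticipate is purely of a bookkeeping nature: checking that $m-$homotopies concatenate correctly in this multi-valued setting (i.e., that the closed-gluing of two $m-$continuous homotopies along $t = 1/2$ remains $m-$continuous), and clarifying where the hypotheses on $\alpha$ are actually invoked. In fact, surjectivity and the $m-$fibration property of $\alpha$ are needed only so that Definition \ref{def2} and its corollary apply; they play no further role in the above $m-$homotopy construction, which relies solely on post-composing the contracting $m-$homotopy of $Y$ with the given maps.
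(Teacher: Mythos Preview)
Your argument is correct and follows essentially the same line as the paper's proof: both reduce tmc$(\alpha)$ to D$^{m}(\alpha \circ \rho_{1},\rho_{2})$ via the preceding corollary and then exploit the $m-$contractibility of $Y$ to force this distance to be $1$. The only cosmetic difference is that the paper invokes the equality D$^{m}(\alpha \circ \rho_{1},\rho_{2}) = $ D$^{m}(\eta \circ \alpha \circ \rho_{1},\eta \circ \rho_{2})$ (from the remark after property \textbf{iv)}, using that the contracting map $\eta : Y \rightrightarrows \{\{y_{0}\}\}$ has a left $m-$homotopy inverse) and then observes that both composites land in a one-point target, whereas you unpack this step explicitly by concatenating the two null $m-$homotopies; your concern about concatenation is harmless, since transitivity of $\simeq_{m}$ is already used freely in the paper (e.g.\ in property \textbf{iii)}).
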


\begin{proof}
	Let $\rho_{i}$ be an $m-$projection on $X$ for each $i = 1,2$. If $Y$ is $m-$contractible (equivalently, $Y$ and $\{\{y_{0}\}\}$ has the same $m-$homotopy type for any fixed element $y_{0}$ in $Y$), then we have an $m-$function $\eta : Y \simeq_{m} \{\{y_{0}\}\}$ having the left and right homotopy inverse $\eta^{'} : \{\{y_{0}\}\} \simeq_{m} Y$. Therefore, we get
	\begin{eqnarray*}
		\text{tmc}(\alpha) = \text{D}^{m}(\alpha \circ \rho_{1},\rho_{2}) = \text{D}^{m}(\eta \circ \alpha \circ \rho_{1},\eta \circ \rho_{2}).
	\end{eqnarray*}
    Since $\eta \circ \alpha \circ \rho_{1}$ is $m-$homotopic to $\eta \circ \rho_{2}$, we obtain tmc$(\alpha) = 1$.
\end{proof}

\begin{example}
	Consider the bijective function $\alpha : \mathbb{C} \rightrightarrows \mathbb{C}$, $x \mapsto \{\sqrt{x},-\sqrt{x}\}$ having the inverse $\alpha^{-1} : \mathbb{C} \rightrightarrows \mathbb{C}$, $y \mapsto \alpha^{-1}(y) =
	\begin{cases}
		\{0\}, & \text{if } y = 0 \\
		\{y^2\}, & \text{if } y \neq 0
	\end{cases}$. We first show that $\alpha$ is upper semicontinuous. Let $V \subset \mathbb{C}$ be an open set in the codomain. For any $x \in \alpha^{+}(V)$, $\alpha(x) = \{\sqrt{x}, -\sqrt{x}\} \subset V$. Then, both $\sqrt{x}$ and $-\sqrt{x}$ are elements of $V$. Since $V$ is open, there exist positive real numbers $\epsilon_1$ and $\epsilon_2$ such that the open balls \[B(\sqrt{x}, \epsilon_1) = \{z \in \mathbb{C} : |z - \sqrt{x}| < \epsilon_1\}\] and \[B(-\sqrt{x}, \epsilon_2) = \{z \in \mathbb{C} : |z - (-\sqrt{x})| < \epsilon_2\}\] are contained in $V$. Now, we define $\epsilon = \min(\epsilon_1, \epsilon_2)$. We claim that the open ball $B(x, \epsilon) = \{y \in \mathbb{C} : |y - x| < \epsilon\}$ is contained within $\alpha^{+}(V)$. Consider any $y \in B(x, \epsilon)$. We obtain \[|y - \sqrt{x}| \leq |y - x| + |x - \sqrt{x}| < \epsilon + \epsilon_1 \leq \epsilon_1,\] and similarly, \[|y - (-\sqrt{x})| < \epsilon_2.\] Therefore, both $|y - \sqrt{x}|$ and $|y - (-\sqrt{x})|$ are less than their respective $\epsilon$ values, and hence, $\sqrt{y}$ and $-\sqrt{y}$ are elements of $V$. It follows that $\alpha(y) = \{\sqrt{y}, -\sqrt{y}\} \subset V$. For every $y \in B(x, \epsilon)$, $\alpha(y) \subset V$ because $y$ is chosen arbitrarily in $B(x, \epsilon)$. So, we get $B(x, \epsilon) \subset \alpha^{+}(V)$. Since $x$ is chosen arbitrarily in $\alpha^{+}(V)$, for every $x \in \alpha^{+}(V)$, there exists an open ball around $x$ that is contained within $\alpha^{+}(V)$. We find that $\alpha^{+}(V)$ is open, which means that $\alpha$ is upper semicontinuous. Similarly, it can be shown that $\alpha$ is lower semicontinuous. Thus, $\alpha$ is an $m-$continuous. Also, a similar process works for showing that $\alpha^{-1}$ is an $m-$continuous. Thus, by Proposition \ref{prop2}, we have that $\alpha$ is an $m-$fibration. In addition, $\mathbb{C}$ is $m-$contractible. So, Proposition \ref{prop1} concludes that tmc$(\alpha) = 1$.
\end{example}

\begin{remark}
	As another illustration, consider the $m-$function $\alpha : [0,1] \rightrightarrows [0,1]$, \[\alpha(x) = \begin{cases}
		\{\dfrac{1}{2}x\}, & x \in [0,\dfrac{1}{2}] \\
		\{1-\dfrac{1}{2}(1-x)\}, & x \in [\dfrac{1}{2},1],
	\end{cases}\] expressed in Example 4.1 of \cite{Anisiu:1981}. Since it is not lower semicontinuous (namely that it is not an $m-$continuous), $\alpha$ is not an $m-$fibration. This means that tmc$(\alpha)$ cannot be computed.
\end{remark} 

\begin{example}
	Consider the $m-$fibration $\alpha : S^1 \rightrightarrows S^1$ defined by $\alpha(x) = \{x, -x\}$ for each point $x \in S^1$. $\alpha$ is surjective since for any point $y \in S^1$, we can take $x = y$ or $x = -y$, and in both cases, we have $\alpha(x) = \{x, -x\} = \{y, -y\} = \{y\}$. Assume that $\rho_1, \rho_2 : S^1 \times S^1 \rightrightarrows S^1$ is the $m-$projection map, where $\rho_1(x, y) = x$ and $\rho_2(x, y) = y$. Then we have $\alpha \circ \rho_1(x, y) = \alpha(x) = \{x, -x\}$ and $\rho_2(x, y) = \{y\}$. The open covering $\{C_1, C_2\}$ of $S^1$ is defined as follows: $C_1 = S^1 - \{(0,1)\}$ and
	$C_2 = \{(0,1)\}$. Both $(\alpha \circ \rho_1)|_{C_j}$ and $\rho_2|_{C_j}$ are $m-$constant maps on $C_j$ with each $j = 1,2$. Hence, they are $m-$homotopic because $C_{1}$ and $C_{2}$ are $m-$contractible. Finally, we obtain that tmc$(\alpha) =2$.
\end{example}

\quad In parallel with tmc$(X)$, we have the quick important result for tmc$(\alpha)$:

\begin{theorem}\label{teo3}
	tmc$(\alpha)$ is a fiber $m-$homotopy equivalent invariant.
\end{theorem}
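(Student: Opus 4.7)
The plan is to run the direct analogue of Theorem \ref{teo2}, replacing the space-level projections by the relative ones coming from Definition \ref{def2}. The key inputs are the Corollary immediately following Definition \ref{def2}, which gives $\text{tmc}(\alpha)=\text{D}^m(\alpha\circ\rho_1,\rho_2)$, together with the composition-invariance properties (iii) and (iv) of $\text{D}^m$ and their upgrade to equality (as noted in the paragraph preceding Theorem \ref{teo2}) whenever one pre- or post-composes with $m$-continuous maps admitting $m$-homotopy inverses.

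I would begin by spelling out the hypothesis: a fiber $m$-homotopy equivalence between $\alpha:X\rightrightarrows Y$ and $\alpha':X'\rightrightarrows Y'$ consists of $m$-homotopy equivalences $\phi:X\rightrightarrows X'$ and $\psi:Y\rightrightarrows Y'$, with $m$-homotopy inverses $\phi'$ and $\psi'$, satisfying $\alpha'\circ\phi\simeq_m\psi\circ\alpha$ (and hence also $\alpha\circ\phi'\simeq_m\psi'\circ\alpha'$). In particular $\phi\times\phi:X\times X\rightrightarrows X'\times X'$ is an $m$-homotopy equivalence and intertwines the $m$-projections via $\rho_i'\circ(\phi\times\phi)=\phi\circ\rho_i$ for $i=1,2$. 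The core computation is then
\[
\text{tmc}(\alpha')=\text{D}^m(\alpha'\circ\rho_1',\rho_2')=\text{D}^m\bigl(\alpha'\circ\phi\circ\rho_1,\ \phi\circ\rho_2\bigr),
\]
by property (iv) with equality, after which $\alpha'\circ\phi\simeq_m\psi\circ\alpha$ together with property (iii) rewrites this as $\text{D}^m(\psi\circ\alpha\circ\rho_1,\ \phi\circ\rho_2)$, and a further application of (iv) with equality — this time stripping the equivalences $\psi$ and $\phi$ from the left — collapses the expression to $\text{D}^m(\alpha\circ\rho_1,\rho_2)=\text{tmc}(\alpha)$.

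The principal obstacle is the codomain mismatch in the intermediate expression $\text{D}^m(\psi\circ\alpha\circ\rho_1,\ \phi\circ\rho_2)$: the first map lands in $Y'$ while the second lands in $X'$, so the bare $\text{D}^m$ shorthand does not literally apply. I would resolve this by rephrasing the whole chain through the $m$-pullback formulation of Theorem \ref{teo1} — computing each $\text{D}^m$ as the msecat of the corresponding pullback $m$-fibration over a common base — and then using that the $m$-pullback along the $m$-homotopy equivalence $\phi\times\psi:X\times Y\rightrightarrows X'\times Y'$ preserves msecat, since pullback of $m$-fibrations is itself an $m$-fibration (as recorded after Proposition \ref{prop2}) and the induced map on $m$-path spaces, obtained by postcomposition with $\phi$, is an $m$-homotopy equivalence. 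In this reformulation every step of the chain is well-defined, and $\text{tmc}(\alpha)=\text{tmc}(\alpha')$ follows.
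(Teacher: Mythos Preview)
The paper gives no proof of Theorem~\ref{teo3}: it is stated as a ``quick important result'' obtained ``in parallel with tmc$(X)$'', and the reader is implicitly asked to transplant the argument preceding Theorem~\ref{teo2}. Your proposal is exactly that transplant, so you are following the route the paper intends.

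Your self-diagnosed obstacle --- the codomain mismatch in the intermediate $\text{D}^{m}$ --- is genuine, and in fact it is already present in the paper's own Corollary after Definition~\ref{def2}: there $\alpha\circ\rho_{1}$ lands in $Y$ while $\rho_{2}$ lands in $X$, so $\text{D}^{m}(\alpha\circ\rho_{1},\rho_{2})$ is not literally an instance of Definition~\ref{def1}. The paper never comments on this. Your resolution through the $m$-pullback formulation of Theorem~\ref{teo1} and the behaviour of msecat under $m$-homotopy equivalences of bases is a reasonable way to make the argument precise; it goes beyond anything the paper actually writes down, but it is in the spirit of what the parallel with Theorem~\ref{teo2} demands.
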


\section{The Lusternik-Schnirelmann Multi-Categories}
\label{sec:5}

\quad Let $X$ be a space. Then the Lusternik-Schnirelmann multi-category of $X$ (denoted by catm$(X)$) is the least positive integer $s$ provided that the following properties hold:
\begin{itemize}
	\item The open covering $\{D_{1},\cdots,D_{s}\}$ of $X$ exists.
	\item An $m-$inclusion $\nu_{j} : D_{j} \rightrightarrows X$ is null $m-$homotopic for each $j \in \{1,\cdots,s\}$.
\end{itemize} 
In the case that such a covering does not exist, the Lustenik-Schnirelmann multi-category is $\infty$. Alternatively, one has catm$(X) =$ D$^{m}(1_{X},\zeta_{x})$, where $\zeta_{x}$ is any $m-$constant map on $X$. Let $\alpha : X \rightrightarrows Y$ be an $m-$function. Then the Lusternik-Schnirelmann multi-category of $\alpha$ (denoted by catm$(\alpha)$) is the least positive integer $s$ provided that the following properties hold:
\begin{itemize}
	\item The open covering $\{D_{1},\cdots,D_{s}\}$ of $X$ exists.
	\item An $m-$inclusion function $\alpha|_{D_{j}} : D_{j} \rightrightarrows Y$ is null $m-$homotopic for each $j \in \{1,\cdots,s\}$.
\end{itemize} 
In the case that such a covering does not exist, the Lustenik-Schnirelmann $m-$category of $\alpha$ is $\infty$. Alternatively, one has catm$(\alpha) =$ D$^{m}(\alpha,\zeta)$, where $\zeta : X \rightrightarrows Y$ is any $m-$constant map. It is clear that $\alpha = 1_{X} : X \rightrightarrows X$ implies that catm$(1_{X}) =$ catm$(X)$. Furthermore, for any two $m-$homotopic $m-$fibrations $\alpha$, $\beta : X \rightrightarrows Y$, we have that catm$(\alpha) =$ catm$(\beta)$.

\begin{remark}
	catm$(X)$ is $1$ if and only if $X$ is $m-$contractible. Moreover, catm$(\alpha)$ is $1$ for an $m-$function $\alpha : X \rightrightarrows Y$ if and only if $Y$ is $m-$contractible.
\end{remark}

\begin{example}
	Consider two open hemispheres of the unit sphere $S^2$, say $D_1$ and $D_2$, obtained by cutting the sphere along a non-degenerate equatorial plane. These two hemispheres overlap along the equator, but each covers one hemisphere of the sphere. So, the covering is given by $\{D_1, D_2\}$. For each $j \in {1, 2}$, define the $m-$inclusion map $\nu_j: D_j \rightrightarrows S^2$ as follows. For $j = 1$, the function $\nu_1$ takes each point $x$ in the open hemisphere $D_1$ to itself on the sphere $S^2$. Formally, $\nu_1(x) = \{x\}$. For $j = 2$, the function $\nu_2$ takes each point $y$ in the open hemisphere $D_2$ to the antipodal point on the sphere $S^2$. Formally, $\nu_2(y) = \{-y\}$. Both of these functions have the property that they are null $m-$homotopic since we can continuously contract each point or its antipodal point to the basepoint on the sphere. Thus, catm$(S^2) = 2$.
\end{example}

\quad For the same reasons as Theorem \ref{teo2} and Theorem \ref{teo3}, we have the quick result:

\begin{theorem}
	\textbf{i)} catm$(X)$ is an $m-$homotopy invariant.
	
	\textbf{ii)} catm$(\alpha)$ is a fiber $m-$homotopy equivalent invariant.
\end{theorem}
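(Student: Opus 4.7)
My plan is to rewrite both quantities via the alternative formulations catm$(X) = $ D$^{m}(1_{X},\zeta_{x})$ and catm$(\alpha) = $ D$^{m}(\alpha,\zeta)$ recorded at the start of Section~\ref{sec:5}, and then transport the resulting $m-$homotopic distances across the relevant equivalences using the invariance principle stated in the paragraph preceding Theorem~\ref{teo2}. That principle says two things: by item~\textbf{iii} of Section~\ref{sec:3}, replacing either argument of D$^{m}$ by an $m-$homotopic map leaves the value unchanged, and by the stronger ``sandwich'' consequence of item~\textbf{iv}, pre- and post-composing both arguments with $m-$homotopy equivalences also leaves D$^{m}$ unchanged. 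This is exactly the mechanism that drove Theorem~\ref{teo2} and Theorem~\ref{teo3}, so I will adapt it to the category setting.

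For part~\textbf{i}, I would fix $m-$homotopy equivalences $\phi : X \rightrightarrows X'$ and $\psi : X' \rightrightarrows X$ with $\phi \circ \psi \simeq_{m} 1_{X'}$ and $\psi \circ \phi \simeq_{m} 1_{X}$. Applying the sandwich $\phi \circ (-) \circ \psi$ to the pair $(1_{X},\zeta_{x})$ produces $(\phi \circ \psi,\, \phi \circ \zeta_{x} \circ \psi)$: the first component is $m-$homotopic to $1_{X'}$, and the second is plainly an $m-$constant map $\zeta_{x'}$ on $X'$. Item~\textbf{iii} then delivers catm$(X) = $ D$^{m}(1_{X},\zeta_{x}) = $ D$^{m}(1_{X'},\zeta_{x'}) = $ catm$(X')$.

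Part~\textbf{ii} follows the same template but with fiber $m-$homotopy equivalence data. Given $\alpha : X \rightrightarrows Y$ and $\alpha' : X' \rightrightarrows Y'$ that are fiber $m-$homotopy equivalent, I would extract $m-$homotopy equivalences $\phi : X \rightrightarrows X'$ and $\psi : Y \rightrightarrows Y'$ together with the intertwining relation $\psi \circ \alpha \simeq_{m} \alpha' \circ \phi$. Post-composing the pair $(\alpha,\zeta)$ with $\psi$ and pre-composing with an $m-$homotopy inverse $\phi'$ of $\phi$ produces a pair $m-$homotopic to $(\alpha',\zeta')$, where $\zeta'$ is again an $m-$constant map into $Y'$. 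The sandwich consequence of item~\textbf{iv}, together with item~\textbf{iii}, then yields catm$(\alpha) = $ D$^{m}(\alpha,\zeta) = $ D$^{m}(\alpha',\zeta') = $ catm$(\alpha')$.

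The main obstacle I anticipate is part~\textbf{ii}, because the paper never writes down a precise definition of \emph{fiber $m-$homotopy equivalence}. One must first commit to a working definition (the natural choice being a pair of $m-$homotopy equivalences on the total and target spaces that intertwine $\alpha$ and $\alpha'$ up to $m-$homotopy), then verify that such equivalences interact correctly with $m-$constant maps — this uses $m-$pathwise connectedness of the target so that any two $m-$constants are interchangeable under $\simeq_{m}$ — and finally confirm that the relevant squares commute in the $m-$sense rather than merely up to single-valued homotopy. Once this bookkeeping is pinned down, the invariance is purely formal and mirrors the arguments already given for Theorems~\ref{teo2} and~\ref{teo3}.
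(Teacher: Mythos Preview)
Your proposal is correct and takes essentially the same approach as the paper: the paper offers no proof beyond the remark ``For the same reasons as Theorem~\ref{teo2} and Theorem~\ref{teo3}'', and your argument simply unpacks that remark, transporting the D$^{m}$-formulations of catm through $m-$homotopy equivalences via items~\textbf{iii} and~\textbf{iv}. Your caveat about the undefined notion of fiber $m-$homotopy equivalence is well taken and applies equally to the paper's own (unwritten) argument for Theorem~\ref{teo3}.
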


\begin{proposition}
	\textbf{i)} Let $X$ be an $m-$pathwise connected space. Then we have catm$(X) \leq$ tmc$(X)$.
	
	\textbf{ii)} Let $X$ and $Y$ be any two $m-$pathwise connected spaces. Given a surjective $m-$fibration $\alpha : X \rightrightarrows Y$, we have that catm$(\alpha) \leq$ tmc$(\alpha)$.
\end{proposition}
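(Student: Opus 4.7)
The plan is to unfold both sides through the $m-$homotopic distance and then pull back a witnessing cover to a slice. From the corollary to Theorem \ref{teo1} and the reformulations at the start of Section \ref{sec:5}, we have tmc$(X) =$ D$^{m}(\rho_{1}, \rho_{2})$, catm$(X) =$ D$^{m}(1_{X}, \zeta_{x_{0}})$, tmc$(\alpha) =$ D$^{m}(\alpha \circ \rho_{1}, \rho_{2})$, and catm$(\alpha) =$ D$^{m}(\alpha, \zeta)$. So in both items it suffices to turn an open cover of the relevant product on which two $m-$functions become $m-$homotopic into an open cover of $X$ on which a single $m-$function becomes $m-$homotopic to a constant.

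For (i), I set $s = $ tmc$(X)$ and pick an open cover $\{D_{1}, \ldots, D_{s}\}$ of $X \times X$ together with $m-$homotopies $H_{j} : D_{j} \times I \rightrightarrows X$ from $\rho_{1}|_{D_{j}}$ to $\rho_{2}|_{D_{j}}$. I then fix any point $x_{0} \in X$ and set $C_{j} = \{x \in X : (x_{0}, x) \in D_{j}\}$. Each $C_{j}$ is open via the homeomorphism $\{x_{0}\} \times X \cong X$, and the family $\{C_{j}\}$ covers $X$ because $\{D_{j}\}$ covers $X \times X$. The $m-$inclusion $\iota_{j} : C_{j} \rightrightarrows X \times X$, $x \mapsto \{(x_{0}, x)\}$, is $m-$continuous, so the composed $m-$homotopy $H_{j} \circ (\iota_{j} \times 1_{I}) : C_{j} \times I \rightrightarrows X$ runs from $\rho_{1} \circ \iota_{j} = \zeta_{x_{0}}$ to $\rho_{2} \circ \iota_{j} = 1_{C_{j}}$. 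Hence the $m-$inclusion $C_{j} \rightrightarrows X$ is null $m-$homotopic for every $j$, which gives catm$(X) \leq s$.

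For (ii), the argument is the symmetric one on the $Y$-side. Starting from a witnessing open cover $\{D_{1}, \ldots, D_{s}\}$ of the product used in the corollary after Definition \ref{def2}, with $s = $ tmc$(\alpha)$, I fix $y_{0} \in Y$ (which exists since $\alpha$ is surjective, so $Y$ is nonempty) and set $C_{j} = \{x \in X : (x, y_{0}) \in D_{j}\}$. The same openness and covering argument gives an open cover of $X$. Precomposing the witnessing $m-$homotopy with the $m-$inclusion $x \mapsto \{(x, y_{0})\}$ converts $\alpha \circ \rho_{1}$ into $\alpha$ and $\rho_{2}$ into the $m-$constant map $\zeta_{y_{0}}$. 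Thus $\alpha|_{C_{j}}$ is null $m-$homotopic for every $j$, and catm$(\alpha) \leq s$.

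The only bookkeeping point is to check that an $m-$homotopy can be precomposed with an $m-$inclusion to yield an $m-$homotopy of the restricted maps; this is immediate from $m-$continuity of composition together with the fact that $m-$inclusions are $m-$continuous (as noted in Section \ref{sec:1}). No deeper obstacle is anticipated.
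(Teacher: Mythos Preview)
Your proof is correct and follows essentially the same approach as the paper: both arguments precompose with a slice inclusion $x \mapsto \{(x_{0},x)\}$ (respectively $x \mapsto \{(x,y_{0})\}$) to reduce the $m-$homotopic distance on the product to the one on $X$. The paper packages this step abstractly as the inequality D$^{m}(\alpha \circ \mu,\beta \circ \mu) \leq$ D$^{m}(\alpha,\beta)$ from property \textbf{iv)}, while you unfold the same idea at the level of open covers and explicit $m-$homotopies; the content is identical.
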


\begin{proof}
	\textbf{i)} Let $\eta_{1}$, $\eta_{2} : X \rightrightarrows X \times X$ be $m-$functions defined as \[\eta_{1}(x) = \{(x,x_{0})\} \hspace*{0.6cm} \text{and} \hspace*{0.6cm} \eta_{2}(x) = \{(x_{0},x)\}\] for a fixed point $x_{0} \in X$. Then we obtain
	\begin{eqnarray*}
		\text{D}^{m}(1_{X},\zeta_{x}) = \text{D}^{m}(\rho_{2} \circ \eta_{2},\rho_{1} \circ \eta_{2}) \leq \text{D}^{m}(\rho_{2},\rho_{1}). 
	\end{eqnarray*}
	
	\textbf{ii)} Let $\eta_{1}$, $\eta_{2} : X \rightrightarrows X \times X$ be $m-$functions defined as \[\eta_{1}(x) = \{(x,x_{0})\} \hspace*{0.6cm} \text{and} \hspace*{0.6cm} \eta_{2}(x) = \{(x_{0},x)\}\] for a fixed point $x_{0} \in X$. Then we obtain
	\begin{eqnarray*}
		\text{D}^{m}(\alpha \circ \rho_{1},\rho_{2}) \geq \text{D}^{m}(\alpha \circ \rho_{1} \eta_{1},\rho_{2} \circ \eta_{1}) = \text{D}^{m}(\alpha,\zeta). 
	\end{eqnarray*}
\end{proof}

\begin{proposition}
	\textbf{i)} Let $X$ and $Y$ be two $m-$pathwise connected spaces. Given an $m-$function $\alpha : X \rightrightarrows Y$, we have that catm$(\alpha) \leq$ catm$(X)$.
	
	\textbf{ii)} Let $Y$ be an $m-$pathwise connected space. Given an $m-$function $\alpha : X \rightrightarrows Y$, we have that catm$(\alpha) \leq$ catm$(Y)$.
\end{proposition}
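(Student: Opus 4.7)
The plan is to deduce both inequalities directly from the alternative characterisations \(\text{catm}(X)=\text{D}^{m}(1_{X},\zeta_{x})\), \(\text{catm}(\alpha)=\text{D}^{m}(\alpha,\zeta)\) and \(\text{catm}(Y)=\text{D}^{m}(1_{Y},\zeta_{y})\), combined with the composition property \textbf{iv)} of the $m-$homotopic distance listed after Definition \ref{def1}. The strategy in both parts is to rewrite the pair $(\alpha,\zeta)$ as the image of a simpler pair under either left or right composition by $\alpha$, after which \textbf{iv)} gives the desired bound.

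For part \textbf{i)}, I would fix an $m-$constant map $\zeta_{x}:X\rightrightarrows X$ at some point $x_{0}\in X$. Since $\alpha\circ\zeta_{x}$ is again an $m-$constant function (its value is the fixed set $\alpha(\{x_{0}\})\subset Y$), it can serve as the $m-$constant $\zeta:X\rightrightarrows Y$ appearing in the definition of $\text{catm}(\alpha)$. Also $\alpha\circ 1_{X}=\alpha$. Property \textbf{iv)} then yields
\begin{eqnarray*}
\text{catm}(\alpha)=\text{D}^{m}(\alpha,\zeta)=\text{D}^{m}(\alpha\circ 1_{X},\alpha\circ\zeta_{x})\leq \text{D}^{m}(1_{X},\zeta_{x})=\text{catm}(X).
\end{eqnarray*}

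For part \textbf{ii)}, I would instead use right composition by $\alpha$. Fix an $m-$constant $\zeta_{y}:Y\rightrightarrows Y$ at some point $y_{0}\in Y$. Then $\zeta_{y}\circ\alpha:X\rightrightarrows Y$ is an $m-$constant map (its value is the fixed set $\{y_{0}\}$ independently of the input), so it plays the role of $\zeta$ in $\text{catm}(\alpha)$, and of course $1_{Y}\circ\alpha=\alpha$. Applying \textbf{iv)} once more,
\begin{eqnarray*}
\text{catm}(\alpha)=\text{D}^{m}(\alpha,\zeta)=\text{D}^{m}(1_{Y}\circ\alpha,\zeta_{y}\circ\alpha)\leq \text{D}^{m}(1_{Y},\zeta_{y})=\text{catm}(Y).
\end{eqnarray*}

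There is no substantive obstacle in either argument; the only thing to verify carefully is that a composite of an $m-$constant with any $m-$continuous function is again an $m-$constant, which is immediate from the definition, and that the particular $m-$constant chosen really is admissible in the definition of $\text{catm}(\alpha)$ (the definition allows any $m-$constant). Once those bookkeeping points are noted, both inequalities fall out from a single application of property \textbf{iv)}.
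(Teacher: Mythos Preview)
Your proposal is correct and follows essentially the same route as the paper: both parts reduce to a single application of property \textbf{iv)} after rewriting $(\alpha,\zeta)$ as $(\alpha\circ 1_{X},\alpha\circ\zeta_{x})$ for \textbf{i)} and as $(1_{Y}\circ\alpha,\zeta_{y}\circ\alpha)$ for \textbf{ii)}. Your write-up is in fact cleaner than the paper's, which inserts a vacuous first step $\text{D}^{m}(\alpha,\zeta)\leq\text{D}^{m}(\alpha\circ 1_{X},\zeta\circ 1_{X})$ and additionally assumes $\alpha(x_{0})=\{y_{0}\}$ is a singleton, whereas you correctly observe that $\alpha\circ\zeta_{x}$ is an $m-$constant (with value the set $\alpha(\{x_{0}\})$) regardless.
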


\begin{proof}
	\textbf{i)} Assume that $\alpha(x_{0}) = \{y_{0}\}$. Then we obtain
	\begin{eqnarray*}
		\text{D}^{m}(\alpha,\zeta) &\leq& \text{D}^{m}(\alpha \circ 1^{m}_{X},\zeta \circ 1^{m}_{X})
		=\text{D}^{m}(\alpha \circ 1^{m}_{X},\{y_{0}\}))
		=\text{D}^{m}(\alpha \circ 1^{m}_{X},\alpha(\{x_{0}\})) \\ &\leq& \text{D}^{m}(1^{m}_{X},\{x_{0}\}) = \text{D}(1^{m}_{X},\zeta_{x}),
	\end{eqnarray*}
	where $\zeta : X \rightrightarrows Y$ and $\zeta_{x} : X \rightrightarrows X$ are two $m-$constant maps given by $\zeta(x) = \{y_{0}\}$ and $\zeta_{x}(x) = \{x_{0}\}$, respectively.
	
	\textbf{ii)} Assume that $\zeta : X \rightrightarrows Y$ and $\zeta_{y} : Y \rightrightarrows Y$ are two $m-$constant maps given by $\zeta(x) = \{y_{0}\}$ and $\zeta_{y}(y) = \{y_{0}\}$, respectively. Then we obtain
	\begin{eqnarray*}
		\text{D}^{m}(1^{m}_{Y},\zeta_{y}) &\geq& \text{D}^{m}(1^{m}_{Y} \circ \alpha,\zeta_{y} \circ \alpha)
		=\text{D}^{m}(1^{m}_{Y} \circ \alpha,\{y_{0}\} \circ \alpha)) \\ 
		&=& \text{D}^{m}(\alpha,\zeta).
	\end{eqnarray*}
\end{proof}

\section{Conclusion}
\label{conc:5}

\quad Multi-valued functions stand as a testament to the evolving landscape of mathematical thought. Rooted in history, their properties and relationships with topology continue to inspire mathematicians, offering powerful tools for understanding complex systems and phenomena across various domains of mathematics and beyond. In this investigation, we add a new one to robotics applications by incorporating the topological complexity computation problem into multi-valued functions. With this study, which contributes to the robot motion planning problem, the first thing that can be done next is to introduce the higher topological $m-$complexity number. Along with the solution of this open problem, higher topological complexity numbers of an $m-$fibration can also be presented. Thus, less thought may be needed on the problem of controlling an autonomous robot in multiple environments.

\acknowledgment{The Scientific and Technological Research Council of Turkey TÜBİTAK-1002-A supported this investigation under project number 122F454.}

\end{document}